\numberwithin{equation}{section}
\newtheoremstyle{plainNoItalics}{}{}{\normalfont}{}{\bfseries}{.}{ }{}
\theoremstyle{plain}
\newtheorem{thm}{Theorem}[section]
\theoremstyle{plainNoItalics}
\newtheorem{cor}[thm]{Corollary}
\newtheorem{lem}[thm]{Lemma}
\newtheorem{remark}[thm]{Remark}
\newcommand{\R}{\mathbb{R}}  
\newcommand\norm[1]{\left\lVert#1\right\rVert}
\def\Re{{\hbox{\rm I \hskip -.075in R}}}
\newcommand{\beq}{\begin{equation}}
\newcommand{\eeq}{\end{equation}}
\newcommand{\be}{\begin{eqnarray}}
\newcommand{\ee}{\end{eqnarray}}
\newcommand{\beno}{\begin{eqnarray*}}
	\newcommand{\eeno}{\end{eqnarray*}}
\def\cC{{\mathcal C}}
\def\cD{{\mathcal D}}
\def\cF{{\mathcal F}}
\def\cG{{\mathcal G}}
\def\cH{{\mathcal H}}
\def\cI{{\mathcal I}}
\def\cJ{{\mathcal J}}
\def\cL{{\mathcal L}}
\def\cM{{\mathcal M}}
\def\cN{{\mathcal N}}
\def\cO{{\mathcal O}}
\def\cP{{\mathcal P}}
\def\cR{{\mathcal R}}
\def\cT{{\mathcal T}}
\def\eps{\epsilon}
\def\vp{\mathbf{p}}
\def\tvp{\tilde{\mathbf{p}}}
\def\vq{\mathbf{p}}
\def\mB{{\mathrm B}}
\def\mbL{{\mathbb L}}
\def\omB{{\overline{\mathrm B}}}
\def\mrE{{\mathrm E}}
\def\mrH{{\mathrm H}}
\def\mrI{{\mathrm J}}
\def\mrL{{\mathrm L}}
\def\mn{{\mathbf n}}
\def\ophi{{\overline{\phi}}}
\def\tPi{{\tilde{\Pi}}}
\def\tw{{\tilde{w}}}
\def\tp{{\tilde{p}}}
\def\dell{\mathrm{d}}
\def\Texit{T_{\mathrm{exit}}}
\def\perpX{\perp} 
\newcommand{\Rmnum}[1]{\expandafter\@slowromancap\romannumeral #1@}
\begin{document}
\baselineskip=1.5pc
\begin{center}
{\bf
Gradient Invariance of Slow Energy Descent: Spectral Renormalization and Energy Landscape Techniques.
}
\end{center}

\vspace{.2in}
\centerline{
Hayriye Guckir Cakir\footnote{
 Department of Mathematics, Michigan State University, East Lansing, MI, 48824. E-mail: guckirha@msu.edu
} and
Keith Promislow \footnote{Department of Mathematics, Michigan State University, East Lansing, MI, 48824. E-mail: promislo@msu.edu}
}

\bigskip
\noindent
{\bf Abstract.}
For gradient flows of energies, both spectral renormalization (SRN) and energy landscape (EL) techniques have been used 
to establish slow motion of orbits near low-energy manifold. We show that both methods
are applicable to flows induced by families of gradients and compare the scope and specificity of the results.  
The SRN techniques capture the flow in a thinner neighborhood of the manifold, affording a leading order representation of 
the slow flow via as projection of the flow onto the tangent plane of the manifold.  The SRN approach requires a spectral gap in
the linearization of the full gradient flow about the points on the low-energy manifold. We provide conditions on the choice of gradient
under which the spectral gap is preserved, and show that up to reparameterization the slow flow is invariant under these choices of gradients.
The EL methods estimate the magnitude of the slow flow, but cannot capture its leading order form. However the EL only requires
normal coercivity for the second variation of the energy, and does not require spectral conditions on the linearization of the full flow. 
It thus applies to a much larger class of gradients of a given energy. We develop conditions under which the assumptions of the 
SRN method imply the applicability of the EL method, and identify a large family of gradients for which the EL methods apply. In particular we apply both 
approaches to derive the interaction of multi-pulse solutions within the 1+1D Functionalized Cahn-Hilliard (FCH) gradient flow,
deriving gradient invariance for a class of gradients arising from powers of a homogeneous differential operator. 

{\bf Key Words:} low energy manifold; gradient flow; spectral renormalization; energy landscape. 

\section{Introduction}

Gradient flows play a fundamental role in material science, biology, and other physical systems in which
dissipation is dominant. They provide mechanisms for self organization of patterns that minimize
the underlying energy of the system. The basic structure is provided by an energy
$\mrI$ that is a smooth map from a Hilbert space $H$ into $\R$, and a gradient, $\cG$, that relates the flow
of the system to the dissipation of the energy. Typically the energy is naturally posed in terms of the inner product
on a larger Hilbert space $X$ that lies between $H$ and its $X$-dual, $H'$. The underlying PDE takes the form
$$
u_t =-\cG \nabla_X \mrI(u),
$$
where $\nabla_X\mrI$ denotes the variational derivative of $\mrI$ in the $X$ inner product, and $\cG$ a non-negative, $X$-self-adjoint linear operator.
The energy $\mrI$ decreases along the orbits and minimizers of $\mrI$ are strong candidates for asymptotically stable equilibrium of the 
gradient flow. The energy landscape (EL) method arose to identify conditions under which manifolds of low-energy configurations engender 
slow flows that remain trapped within a thin neighborhood of the manifold. The EL method seems to have originated in the study of slow motion 
of radial interfaces in the Cahn-Hilliard system, \cite{ABF-98}, and was developed into a more general framework in \cite{OR-07} and more recently in
\cite{BFK-18}. The method makes few direct assumptions on the smoothness of the manifold nor upon the gradient, requiring only that the 
energy has little variation over the manifold, increases uniformly in the direction normal the manifold, and that there is well-defined projection from an $H$-neighborhood of the manifold onto the manifold. It is natural to  compare these results to the spectral renormalization (SRN) framework developed in \cite{Prom-02} for damped-forced  Hamiltonian systems and adapted in \cite{DKP-07} and \cite{BDKP-13} to singularly-perturbed reaction diffusion systems. 

The SRN method establishes the existence of slow flows in a neighborhood of a manifolds comprised of quasi-steady solutions. 
It makes detailed assumptions on the spectrum of the linearization, $\mbL:=-\cG\nabla_X^2 \mrI,$ of the vector field $F:=-\cG\nabla_X\mrI$ at 
the points on the manifold and renormalizes the estimates on the point-wise linearized operators into nonlinear semigroup estimates that 
contract the flow from a larger neighborhood into a substantially thinner neighborhood of the manifold.  The SRN method requires the manifold 
to be a graph over a finite dimensional space. Heuristically, if the vector field evaluated on the manifold satisfies 
$\|F(u)\|_H\approx \delta$ then the slow flow evolves on an $O(\delta)$ time-scale. The attracting neighborhood for the SRN approach has 
an $O(\delta)$ $H$-norm thickness and the distance of the orbit to the manifold contributes an $O(\delta^2)$ error. On the other hand the 
EL method embeds the manifold in a forward invariant neighborhood with an $O(\sqrt{\delta})$ thickness in the $H$-norm, whose $O(\delta)$
contribution to the error swamps the resolution of the slow flow. The SRN method resolves the leading order terms in the projection of 
the residual flow onto the tangent plane of the manifold, yielding a finite dimensional, closed form reduction of the slow flow. 
The EL approach affords bounds on the rate of the slow flow, but does not extract leading order information on the projection of the slow flow onto 
the tangent plane of the manifold. 

While the SRN method is quite general, applying to broad classes of damped-dispersive and dissipative systems, it requires significantly 
more machinery to apply than the EL approach, in particular it requires a spectral gap condition on the point-wise linearizations of the full gradient flow
at each location on the manifold.  For a given energy we establish conditions under which families of gradients which share the same kernel preserve 
the spectral gap.  We show that within these families the slow flows are equivalent up to reparameterization. To compare the applicability of the SRN and EL 
approaches, we develop mild additional conditions under which the assumptions of the SRN method guarantee the applicability of the EL approach. Indeed the 
generality of the EL approach allows it to encompass a substantially larger class of gradients than the SRN methodology. It is not intuitively obvious what 
becomes of the slow flow for choices of gradients for which the SRN fails while the EL approach holds. It is unclear if the failure of the SRN approach is technical, 
or if there is the potential for a more complex flow that is not slaved at leading order to its projection onto the tangent plane of the manifold. 
 
The EL approach has strong analogy to the much older orbital stability approach for Hamiltonian systems,
pioneered by  Brooke Benjamin, \cite{BB-72}. These exploit the conservation of the underlying energy, $\cH: H\mapsto \R$, rather than its decay, to 
maintain proximity of solutions of the Hamiltonian flow to a manifold of orbits.  The Hamiltonian flows take the form
$$
u_t = \cJ \nabla_X\cH,
$$
where the linear operator  $\cJ$ is skew with respect to 
the inner product of a Hilbert space $X$, which again resides between $H$ and its $X$-induced dual $H'$.
The approach characterizes critical points of the energy $\cH$ as minimizers subject to additional constraints induced by conserved 
quantities arising from symmetries of the energy.  The symmetries generate a manifold of equilibrium from the orbit of a single critical point under their 
group action. The orbital stability approach has broad applicability since it is largely independent of the specific form 
of the skew operator, and relies principally upon the analysis of the second variation of the energy $\cH$ at the point on the manifold of equilibrium.  
This is fortuitous as the second variation, $\nabla_X^2 \cH$ is a self-adjoint linear operator in the inner-product in which it is taken, while the 
full linearization, $\cJ\nabla_X^2 \cH$ is generically not self-adjoint. If the critical point of
$\cH$ is a strict minimizer, then the second variation has no negative eigenvalues; however this is rarely the case. Various stability indices have 
been developed that relate the number of negative eigenvalues of $\nabla_X^2 \cH$ to the number of complex eigenvalues of  
$\cJ\nabla_X^2 \cH$ with positive real part: eigenvalues which denote instability. Generically the larger the number of negative eigenvalues of the 
second variation, the greater the number of instabilities that are available to the flow.  A central result is that if the conserved 
quantities of the flow constrain it to lie in a finite co-dimensional space, then the relevant index is the number of negative eigenvalues of the second 
variation constrained to act on the reduced space. The calculation of this constrained eigenvalue count is the basis of the 
seminal work of Grillakis,  Shatah, and Strauss, \cite{GSS-I, GSS-II} and is summarized in \cite[Chapter 5]{KapProbook}. This constrained
eigenvalue count approach is  exploited in this work to establish the implication of the EL assumptions under the SRN hypotheses. Indeed, the SRN 
framework was originally derived to extend the orbital stability approach to classes of weakly damped-forced Hamiltonian systems arising in nonlinear 
optics.

As a test case, we apply both the SRN and EL approaches to the gradient flows of the Functionalized Cahn-Hilliard (FCH) free energy on a bounded, 
one-dimensional domain. The FCH free energy, presented in \cite{ProWetton} and in \cite{Gavish2011CurvatureDF}, is a reformulation of the energy of 
oil-water-surfactant microemulsions proposed
by \cite{TS-87} and revised in \cite{GS-90}. The FCH assigns an energy to a mixture of surfactant and solvent according to the volume fraction, $u$ of surfactant 
via its proximity to the large class of solutions of the second-order nonlinear system: 
\beq\label{e:NL-PDE}
\eps^2\Delta u= W'(u),
\eeq
subject to appropriate boundary conditions. More specifically the FCH energy takes the form 
\begin{equation}
\label{e:FCH}
\begin{aligned}
\cF\left(u\right)=\displaystyle\int_{\Omega}\frac{1}{2}\left(\varepsilon^2\Delta{u}-W'(u)\right)^2-\varepsilon^p\left(\frac{\eta_1\varepsilon^2}{2}\lvert\nabla{u}\rvert^2+\eta_2{W(u)}\right)dx, 
\end{aligned}
\end{equation}
where $\varepsilon\ll1$ is the ratio of amphiphilic molecule length to domain size and $\eta_1>0$, $\eta_2\in\R$. 
For $p=1$, the FCH corresponds to the strong functionalization while for $p=2$ it is a model for the weak functionalization. We assume that $W(u)$ is a double-well with two \emph{unequal} depth minima at $b_-<b_+$, satisfying $W(b_-)=0>W(b_+)$. The minima are non-degenerate in the sense that
$\alpha_\pm\coloneqq{W}''(b_\pm)>0$. 
As we restrict ourselves to one space dimension, the functionalization terms, those with the prefactors $\eta_1$ and $\eta_2$, 
play a negligible role and we set them equal to zero. In this case all solutions of the 1D version of (\ref{e:NL-PDE}) are global minimizers of the FCH free energy.  In \cite{ProZhang}, the existence of global minimizers was established over a variety of admissible function space for a class of generalizations of the FCH free energy.

\section{Spectral Renormalization and Energy Landscape approaches for Quasi-Steady Flows}
We present frameworks for the SRN  and the EL approaches for deriving slow 'quasi-steady' flows in neighborhoods of 
manifolds with low energy variation.
We consider classes of gradients with common kernels, and derive conditions on the gradients under which the SRN applies uniformly.
We also develop conditions under which the SRN assumptions satisfy the assumptions required to apply the EL approach, and show
that this includes choices of gradients for which the SRN does not directly apply.

\subsection{The Spectral Renormalization Framework}
The framework presented in \cite{Prom-02} was designed for damped-forced dispersive wave systems but applies more
generally to abstract dynamical system of the form
\beq
\label{e:abstract_ev}
u_t = F(u),
\eeq
that are locally well-posed on a pair of nested Hilbert spaces $H\subset X\subset H'$.  The key assumption is the 
existence of a quasi-steady manifold $\cM$ which is explicitly parameterized as the graph of a map $\Phi:\cP\subset \R^n\mapsto H$ 
\beq
 \cM = \{  \Phi(\vp) \, \bigl | \vp \in \cP\subset\R^n \}.
 \eeq
 The domain $\cP$ may be with or without boundary. We assume that the vector field $F$ admits an expansion of the 
 form
 \beq
 \label{e:F-taylor}
 F(\Phi+v) = \cR(\vp) + \mbL_\vp v + \cN_S(v),
 \eeq
where the residual, $\cR(\vp):=F(\Phi(\vp))$ is small,  $\mbL=\mbL_\vp$ is the linearization of $F$ 
at $\Phi(\vp)$ and the nonlinearity for the spectral approach satisfies a generic estimate
\beq
\label{e:SRN-nonlin}
\|\cN_S(v)\|_H \leq C \|v\|_H^r,
\eeq
where $r>1$ and $C$ may be chosen independent of $\vp\in\cP.$    
We assume that there exists a fixed value of $\delta>0$,  for which the quasi-steady manifold and the associated linearization 
satisfy the following hypotheses: 
\vskip 0.1in
\noindent(\textbf{H0}) The manifold $\cM$ is quasi-steady: that is, there exists $C_0>0$ such that for all 
$\vq\in\cP$,
\begin{equation}
\label{e:H0}
\norm{\cR(\vq)}_{H}\leq C_0\delta.
\end{equation}
\vskip 0.1in
\noindent
(\textbf{H1}) There exists $k_0, k_s>0$ such that for each $\vq\in\cP$ the spectrum of the operator  $\mbL_\vq$, viewed as a map from
$H$ into $X$ consists of a stable part $\sigma_s\subset\{\lambda\lvert{\lambda}\leq{-k_s}\}$  and a slow part 
$\sigma_0\subset\{\lambda\lvert{|\lambda|}\leq{c_0\delta}\}$. The associated slow eigenspace $Y_\vq$ has dimension $n$, equal to both
the dimension $\cP$ and to the tangent space to $\cM$.
\vskip 0.1in
\noindent
(\textbf{H2}) There exists $C_2>0$ such that for each fixed  $\vq\in\cP$, the operator $\mbL_\vq$ generates a $C_0$ semigroup 
$S_\vq$ which satisfies
\begin{equation}
\label{e:H2}
\norm{S_\vq (t)u}_{H}\leq C_2 e^{-k_st}\|u\|_{H}, 
\end{equation}
for all $t\geq0$ and all $u\in{Y_\vq'}\coloneqq Y_\vq^\perp \cap H$, where the perp is taken in the $X$ norm.

\vskip 0.1in
\noindent
(\textbf{H3}) For each $\vq\in\cP$, $Y_\vq$ is well-approximated by the tangent plane $\cT(\vq)$ of $\cM$ at $\vq$.  
Specifically, there exists a constant $C_3>0$ and an ordering $\{\psi_1, \ldots, \psi_n\}$ of the eigenfunctions of $Y_\vq$
such that 
\beq
\label{e:H3}
\left \| \psi_i(\vq) - \frac{\partial \Phi(\cdot;\vq)}{\partial p_i} \right\|_{H} \leq C_3 \delta, \hspace{0.50in} {\rm for}, i=1, \ldots, n,
 \eeq
 holds for all $\vq\in\cP.$
\vskip 0.1in
\noindent
(\textbf{H4})  There exists a constant $C_4>0$ such that  the normalized eigenvectors $\{\psi_1,\ldots,\psi_{n}\}$ of the $Y_\vq$ satisfy
\begin{equation}
\label{e:H4}
\max_{\substack{i=1,...,n\\ \vq \in\cP}}\left(\norm{\psi_i(\vq)}_{H}+\norm{\nabla_\vq^2\psi_i(\vq)}_{H}\right)\leq{C_4}.
\end{equation}
Under these hypotheses we have the following reduction.

\begin{thm}\cite[Theorem 2.1]{Prom-02}
\label{t:KP}
Suppose that the system (\ref{e:abstract_ev}) has a manifold $\cM$ for which the hypotheses (H0)-(H4) and (\ref{e:SRN-nonlin}) 
are satisfied for some $r>1$ and some $\delta>0$ sufficiently small. Then there exists $\eta_0$ and $M_0>0$, such that 
the solutions $u$ of \eqref{e:abstract_ev} corresponding to initial data $u_0$ that lie within an $\eta_0$-neighborhood of $\cM$ in $H$ 
can be decomposed as
\begin{equation}
\label{e:Phi-w-decomp}
u(t)=\Phi(\cdot,\vp(t))+w(\cdot,t),
\end{equation}
where the deviation $w\in Y_{\vp(t)}'$ satisfies
\begin{equation}
\label{e:w-eq}
\norm{w(\cdot,t)}_{H}\leq{M_0}({\eta_0}e^{-k_s(t-t_0)}+{\delta}) \quad for \quad t\in (0,\Texit).
\end{equation}
If $\vp(0)$ is an $O(1)$ distance to $\partial\cP$, then the exit time $\Texit\geq c_0 \delta^{-1}$.
After a transient time, $T_1=O(|\ln\delta/\eta_0|) \ll \Texit$, the deviation satisfies $\|w\|_{H}=\cO(\delta)$ and the
parameters $\vp(t)$ evolve at leading order via the closed system
\begin{equation}
\label{pulseODE}
\dot{p}_i=\Bigg\langle \cR(\vp),\frac{\partial \Phi}{\partial p_i}\Bigg\rangle_{X}+\cO(\delta^{1+r},\delta^2) \qquad for\quad t>T_1,
\end{equation}
for $i=1,\dots,n$. If the set $\cP$ is forward invariant under this flow, then we may take $\Texit=\infty.$
\end{thm}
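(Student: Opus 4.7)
The plan is to establish the decomposition and reduced dynamics via a transverse projection, a Duhamel estimate built on the frozen-parameter semigroup from (H2), and a coupled bootstrap/continuation argument. First, I would construct the decomposition $u = \Phi(\vp) + w$ subject to the constraint $w \in Y_{\vp}'$, i.e.\ $\langle w, \psi_j(\vp)\rangle_X = 0$ for $j = 1,\dots,n$. For $u$ in a sufficiently thin $H$-tube around $\cM$, this is an $n$-dimensional nonlinear equation in $\vp$. By (H3) the tangent vectors $\partial\Phi/\partial p_i$ are $O(\delta)$-close in $H$ to the slow eigenfunctions $\psi_i(\vp)$, and (H4) gives uniform $H$-bounds on the $\psi_i$ and their $\vp$-derivatives. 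The Jacobian of the constraint is therefore, up to $O(\eta_0+\delta)$, the Gram matrix $G_{ij}=\langle \psi_i,\psi_j\rangle_X$, which after a standard orthonormalization is uniformly invertible. The implicit function theorem then produces a $C^1$ map $u \mapsto \vp(u)$ on the $\eta_0$-tube and a well-defined $w \in Y_\vp'$.

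Next, substituting \eqref{e:Phi-w-decomp} into \eqref{e:abstract_ev} and using \eqref{e:F-taylor} yields
\begin{equation*}
w_t = \mbL_\vp w + \cR(\vp) + \cN_S(w) - \sum_{i=1}^n \dot p_i\, \frac{\partial \Phi}{\partial p_i}.
\end{equation*}
Differentiating $\langle w,\psi_j(\vp)\rangle_X=0$ in $t$ and inserting this evolution produces a linear system for $\dot\vp$ whose coefficient matrix is the same near-identity Gram matrix as above. The slow-eigenfunction property from (H1) ensures $\langle \mbL_\vp w,\psi_i\rangle_X = O(\delta\|w\|_H)$ (via the spectral-radius bound on the restriction to $Y_\vp$), so inverting the Gram matrix gives
\begin{equation*}
\dot p_i = \langle \cR(\vp),\partial\Phi/\partial p_i\rangle_X + O(\|w\|_H^r)+O(\delta\|w\|_H),
\end{equation*}
which reduces to \eqref{pulseODE} once $\|w\|_H = O(\delta)$ and $r>1$.

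To obtain the $w$-bound I would freeze the parameter at $\vp_0 = \vp(t_0)$ and apply Duhamel relative to the semigroup $S_{\vp_0}$ from (H2). The drift $\mbL_{\vp(s)}-\mbL_{\vp_0}$ is controlled by $|\vp(s)-\vp_0|$, hence by the a priori bound $|\dot\vp| = O(\delta)$ of the previous step, while the mismatch between $Y_{\vp_0}'$ and $Y_{\vp(s)}'$ contributes only $O(\delta)$ angle errors by (H3)--(H4). Combined with $\|\cR\|_H\leq C_0\delta$ from (H0) and $\|\cN_S(w)\|_H\leq C\|w\|_H^r$ from \eqref{e:SRN-nonlin}, this yields
\begin{equation*}
\|w(t)\|_H \leq C_2\eta_0 e^{-k_s(t-t_0)} + C_2\int_{t_0}^t e^{-k_s(t-s)}\bigl(C_0\delta + C\|w(s)\|_H^r + O(\delta\|w(s)\|_H)\bigr)\,ds.
\end{equation*}
A nonlinear Gronwall/continuation argument then delivers \eqref{e:w-eq} on a maximal interval; the transient $T_1 = O(|\ln(\delta/\eta_0)|)$ is the time needed for the first term to drop below $\delta$, and $\Texit\geq c_0/\delta$ follows because $\dot\vp = O(\delta)$ is the fastest rate at which $\vp$ can reach $\partial\cP$.

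The main obstacle is the non-autonomous character of $\mbL_{\vp(t)}$: (H2) supplies semigroup decay only for frozen parameters, while the continuation argument requires decay along the moving orbit. The resolution is mutually self-consistent --- the $\dot\vp=O(\delta)$ estimate keeps $|\vp(t)-\vp_0|$ small on any $O(1/k_s)$ window, so $\mbL_{\vp(t)}-\mbL_{\vp_0}$ is subordinate to the exponential decay of $S_{\vp_0}$ --- but this demands that the bootstrap for $w$, the ODE for $\vp$, and the transversality maintained by the projection be closed simultaneously. Keeping these three pieces mutually consistent through the continuation argument is the delicate bookkeeping that the SRN machinery is designed to handle.
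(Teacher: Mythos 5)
The paper does not actually prove Theorem~\ref{t:KP}: it is imported verbatim from \cite[Theorem 2.1]{Prom-02} (the only ingredient of its proof used elsewhere in the paper is the tubular-neighborhood projection of Proposition~2.2 of that reference, invoked in the proof of Theorem~\ref{t:KP-BFK}). So there is no in-paper argument to compare against; measured against the cited proof, your outline has the right architecture --- implicit-function-theorem decomposition enforcing $w\in Y_{\vp}'$, modulation equations obtained by differentiating the orthogonality constraint, and a frozen-coefficient Duhamel estimate closed by a bootstrap. Two places where the sketch is looser than what is actually needed: (i) the claim $\langle \mbL_\vp w,\psi_i\rangle_X = O(\delta\|w\|_H)$ does not follow from the spectral radius of $\mbL_\vp$ restricted to $Y_\vp$, because $w$ lies in $Y_\vp'$, not $Y_\vp$; one must either pair against the adjoint slow eigenfunctions or use that $Y_\vp'$ is (approximately) invariant under $\mbL_\vp$, which is automatic only in the self-adjoint case relevant to the FCH application; and (ii) the step you defer as ``delicate bookkeeping'' is in fact the renormalization that gives the method its name --- the frozen-parameter Duhamel estimate must be iterated over successive time windows of length $O(k_s^{-1}|\ln\delta|)$, re-freezing $\vp$ and re-projecting $w$ at the start of each window, and it is this iteration that converts the pointwise-in-$\vp$ hypothesis (\textbf{H2}) into decay along the moving orbit and produces the stated $\Texit\geq c_0\delta^{-1}$. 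Finally, note that your error term $O(\|w\|_H^r)=O(\delta^r)$ in the reduced ODE is weaker than the stated $O(\delta^{1+r},\delta^2)$ when $1<r<2$; recovering the sharper exponent requires a further decomposition of $w$ into its explicit $O(\delta)$ quasi-steady part plus a smaller remainder before projecting the nonlinearity. None of these is a wrong turn, but they are precisely the content of the proof you are sketching around.
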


\subsection{The Energy Landscape Framework}

We compare the scope and results of Theorem\,\ref{t:KP} with the energy landscape techniques introduced in \cite{OR-07} and 
refined in \cite{BFK-18}.  The GS approach uses the uniform coercivity of the energy in the directions normal to the 
quasi-steady manifold to develop an excluded zone which dynamically traps orbits in a thin neighborhood of the manifold.
Specifically,  the approach assumes an energy $\mrI:H\mapsto \R$, nested Hilbert spaces $H\subset X \subset H^*$, 
and an associated gradient system
\beq
\label{G-flow}
u_t = F(u):=-\cG \nabla_X\mrI (u),
\eeq
with the variational derivative of $\mrI$ taken in the $X$ norm.  It is often the case that the energy is naturally
formulated in the inner product on one space, $X$, while the gradient is calculated in a different inner product.
To emphasize this we have introduced the gradient operator $\cG$, a non-negative $X$-self-adjoint, linear operator that may possess a finite dimensional kernel. 
We assume that $\cG$ has an inverse that is uniformly bounded as a map, $\cG^{-1}:X_\cG \mapsto X_\cG$, where $\Pi_\cG$ is the $X$-orthogonal projection 
onto $X_\cG:=\textrm{ker}(\cG)^\perp.$ 
We introduce $\cG_1:=\cG^{\frac{1}{2}},$ and the associated inner product
\beq
\label{e:Gip-def}
\langle u, v\rangle_\cG:= \langle \cG_1^{-1} u, \cG_1^{-1}v\rangle_{X}.
\eeq
It straightforward to see that for $u\in H$ the variational derivative of $\mrI$ in the $\cG$-inner product satisfies the relation
$\nabla_\cG \mrI = \cG \nabla_X\mrI$, 
and hence (\ref{G-flow}) is the gradient flow of $\mrI$ in the $\cG$ norm. This flow decreases the energy,
\beq
\frac{d}{dt}\mrI(u(t)) = \left\langle \nabla_X \mrI , u_t\right\rangle_{X} =- \left\|\cG_1 \nabla_X\mrI \right\|_{X}^2
=- \left\|\nabla_\cG\mrI \right\|_{\cG}^2 \leq 0,
\eeq
and for any initial data $u_0\in H$ it leaves the space $u_0+X_\cG$ invariant. Indeed if $v\in\textrm{ker}(\cG)$ then
\beq
\frac{d}{dt}\langle u(t), v\rangle_{X} =- \left\langle \cG  \nabla_X \mrI (u), v \right\rangle_{X} = - \left\langle  \nabla_X \mrI(u),\cG v \right\rangle_{X}=0.
\eeq
The main result of the EL approach states that if $u\in H$ is sufficiently close to the quasi-steady manifold $\cM$, the manifold is normally $H$-coercive, 
and the energy of $u$ is low, then the $H$-distance of $u$ to $\cM$, denoted $d_H(u,\cM)$, is controlled by the energy, which is non-increasing, and hence
$u$ must remain close to manifold so long is it does not reach its boundary. 
In addition to the normal coercivity assumption, a key role is played by a projection onto the manifold. 

For simplicity of presentation we consider a less general framework than that presented in \cite{BFK-18}. Some of these
modifications arise from the fact that we have explicitly factored the variational derivative of $\mrI$ into a variational derivative 
in the base space $X$ and a linear gradient $\cG$. While this sacrifices some generality, it makes the relative independence of
the results upon the choice of gradient $\cG$ more explicit.

\vskip 0.1in
\noindent (\textbf{A0}) 
There exists  a smooth manifold $\cM$  embedded into the Hilbert space $H$, a $\delta_0>0$, and an energy $\mrI$ defined in $H$ on which the
energy has small variation, 
\beq | \mrI(u_1)-\mrI(u_2)| \leq \delta_0, \hspace {0.5in} \textrm{for}\,\textrm{all}\quad u_1, u_2\in \cM.
\label{e:A0}
\eeq

\vskip -0.01in
\noindent (\textbf{A1})  There exists a projection $\Pi_\cM$ on $\cM$, with complement $\tPi_{\cM}:=I-\Pi_{\cM}$, 
defined within an $H$-neighborhood of size $\eta>0$ of $\cM$ and a constant $c_1>0$ such that for all $u$ in the neighborhood
\beq
 \|\tPi_\cM u\|_\mrH \leq c_1 d_\mrH (\cM,u), 
\eeq
where $d_H$ denotes the $H$-norm distance function.

\vskip -0.02in
\noindent (\textbf{A2})
For all $u$ with $d_H(u,\cM)<\eta$, the functional $\mrI$ admits an $X$-variation expansion of the form
\beq
\label{e:A20}
 \mrI(u) = \mrI(\Pi_\cM u) + \left\langle \nabla_X\mrI(\Pi_\cM u), \tPi_\cM u\right\rangle_{X} + \left\langle \nabla^2_X\mrI (\Pi_{\cM}u)\tPi_\cM u, \tPi_\cM u\right\rangle_{X} + \cN_E(\tPi_\cM u),
 \eeq
 which satisfies the following: small residual,
 \beq 
 \label{e:A21}
\left  |\left\langle \nabla_X \mrI(\Pi_\cM u), \tPi_\cM u\right\rangle_{\!\! X}\right| \leq \delta_2 \|  \tPi_\cM u\|_\mrH,
 \eeq
$X$ to $H$ normal coercivity,
 \beq
 \label{e:A22}
 \left \langle \nabla^2_X\mrI(\Pi_{\cM}u)\tPi_\cM u , \tPi_\cM u\right\rangle_{\!\!X} \geq \mu_2 \|\tPi_\cM u\|_\mrH^2,
\eeq
and bounded nonlinearity,
 \beq
 \label{e:A23}
 | \cN_E(\tPi_\cM u)| \leq c_2 \| \tPi_\cM u\|_\mrH^\rho,
 \eeq  
for some $\delta_2,c_2>0$, some $\mu_2>0$, and $\rho>2.$

The result exploits the structure of the energy $\mrI$ and hence remarkably, is substantially independent of
the choice of the gradient $\cG$. The proof requires little more than the quadratic formula.  

\begin{thm} \cite[Theorem 2.1]{BFK-18}
\label{t:BFK}
Suppose there exists a choice of gradient $\cG$ for which
the energy $\mrI$, the manifold $\cM$, and the projection $\Pi_\cM$ satisfy (\textbf{A0})-(\textbf{A2}). Assume $u\in \mrH$ satisfies
\beq
\label{e:delta1-def}
  \mrI(u) \leq \sup\limits_{\Phi\in \cM} \mrI(\Phi) + \delta_1,
\eeq
for some $\delta_1>0$. Define
\beq 
\eta^* := \min \left\{\eta, \frac{1}{c_1}\left(\frac{\mu_1}{2c_2}\right)^{\frac{1}{s-2}}\right\}
\eeq
and 
\beq
\label{e:eta_*-def}
\eta_*:= \frac{\delta_2}{\mu_2} +\sqrt{\frac{\delta_2^2}{\mu_2^2} + 2 \frac{\delta_0+\delta_1}{\mu_2}}.
\eeq
If  $\delta_0$, $\delta_1$, and $\delta_2$ are small enough that $\eta_*<\eta^*$, then
\beq d_H(u,\cM)<\eta^* \implies d_H(u,\cM)<\eta_*.
\label{e:BFK}
\eeq
\end{thm}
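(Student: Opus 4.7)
The plan is to combine the quadratic expansion in (\textbf{A2}) with the energy bound (\ref{e:delta1-def}) to produce a quadratic inequality in the scalar quantity $\|\tPi_\cM u\|_H$, then invoke (\textbf{A1}) to transfer this bound to $d_H(u,\cM)$. Throughout I will write $\Phi := \Pi_\cM u$ and $v:=\tPi_\cM u$, so that $u=\Phi+v$ with $\Phi\in\cM$.

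First I would record the three consequences of $d_H(u,\cM)<\eta^*$ that drive the argument. By (\textbf{A1}) applied inside the $\eta$-neighborhood, $\|v\|_H\leq c_1 d_H(u,\cM)<c_1\eta^*$. Since $\eta^*\leq \tfrac{1}{c_1}(\mu_2/(2c_2))^{1/(\rho-2)}$ (I interpret the $\mu_1, s$ in the displayed definition of $\eta^*$ as typographical stand-ins for $\mu_2, \rho$ from (\textbf{A2})), this yields $c_2\|v\|_H^{\rho-2}\leq \mu_2/2$, and therefore $c_2\|v\|_H^\rho\leq \tfrac{\mu_2}{2}\|v\|_H^2$. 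This is the mechanism by which the higher-order nonlinearity (\ref{e:A23}) is absorbed into one half of the normal-coercivity estimate (\ref{e:A22}).

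Next I would use the expansion (\ref{e:A20}) at $u$, bound each term from below, and isolate a quadratic lower bound for $\mrI(u)-\mrI(\Phi)$. The residual term is controlled by $|\langle \nabla_X\mrI(\Phi),v\rangle_X|\leq \delta_2\|v\|_H$ from (\ref{e:A21}); the quadratic form is bounded below by $\mu_2\|v\|_H^2$ from (\ref{e:A22}); and the nonlinearity satisfies $\cN_E(v)\geq -c_2\|v\|_H^\rho\geq -\tfrac{\mu_2}{2}\|v\|_H^2$ by Step~1. Adding these,
\begin{equation*}
\mrI(u)-\mrI(\Phi)\;\geq\; \tfrac{\mu_2}{2}\|v\|_H^2 - \delta_2\|v\|_H.
\end{equation*}
For the matching upper bound I combine (\textbf{A0}) and (\ref{e:delta1-def}): since $\Phi\in\cM$, $\mrI(\Phi)\geq \sup_{\Psi\in\cM}\mrI(\Psi)-\delta_0$, and by hypothesis $\mrI(u)\leq\sup_{\Psi\in\cM}\mrI(\Psi)+\delta_1$, so $\mrI(u)-\mrI(\Phi)\leq \delta_0+\delta_1$.

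Finally I would solve the resulting scalar inequality
\begin{equation*}
\tfrac{\mu_2}{2}\|v\|_H^2 - \delta_2\|v\|_H - (\delta_0+\delta_1)\;\leq\;0
\end{equation*}
by the quadratic formula, which gives exactly $\|v\|_H\leq \tfrac{\delta_2}{\mu_2}+\sqrt{\delta_2^2/\mu_2^2+2(\delta_0+\delta_1)/\mu_2}=\eta_*$. Since $\Phi\in\cM$, the trivial bound $d_H(u,\cM)\leq \|u-\Phi\|_H=\|v\|_H$ promotes this to $d_H(u,\cM)\leq \eta_*$, and the strict inequality in (\ref{e:BFK}) is recovered by the fact that $\eta_*<\eta^*$ forces the inequality to be strict on an open set (or, equivalently, by replacing $\delta_0+\delta_1$ with a slightly smaller value compatible with the strict hypothesis). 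I do not anticipate a serious obstacle: the only subtlety is ensuring that the smallness of $\delta_0,\delta_1,\delta_2$ really is used only through the single clean condition $\eta_*<\eta^*$, and that the nonlinearity is absorbed uniformly rather than pointwise in $u$. The genuinely conceptual content of the theorem lies in the hypotheses (\textbf{A0})--(\textbf{A2}) themselves; once those are in hand the derivation is a one-variable calculus argument.
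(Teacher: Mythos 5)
Your proof is correct and is exactly the argument the paper has in mind: the paper does not reproduce a proof of this cited result, but remarks that ``the proof requires little more than the quadratic formula,'' which is precisely your combination of (\textbf{A0})--(\textbf{A2}) into the scalar inequality $\tfrac{\mu_2}{2}\|\tPi_\cM u\|_H^2-\delta_2\|\tPi_\cM u\|_H-(\delta_0+\delta_1)\leq 0$, with the nonlinearity absorbed via the threshold built into $\eta^*$. Your reading of $\mu_1$ and $s$ in the definition of $\eta^*$ as typographical stand-ins for $\mu_2$ and $\rho$ is also the correct one, and the only loose end --- strict versus non-strict inequality in the conclusion --- is cosmetic.
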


The SRN and the EL techniques have non-trivial overlap in their applicability. We first consider the ``base-case'' in which the gradient $\cG$  
is taken to be the $X$-orthogonal projection onto a prescribed kernel.  We show that the SRN hypotheses imply the majority of the EL 
assumptions for this case, and develop two additional hypotheses, one for the SRN and one for the EL, under which the EL assumptions hold 
in their entirety. The first assumption simplifies the interaction of the manifold and the kernel of the gradient, and the second 
mirrors standard interpolation results used to boost coercivity into the strong norm.   The result, Theorem\,\ref{t:KP-BFK}, 
emphasizes that the EL approach holds for a large class of gradients which share the same kernel.  
The second main result, given in Section\,\ref{s:GISF} develops additional assumptions on the gradients for which the 
SRN may be extended beyond the base-case gradient. This extension requires a non-trivial reformulation of the problem to symmetrize the
gradient flow linearization $\mbL$.  

\vskip 0.1in
\noindent (\textbf{EH1})
Let $u_0$ denote the initial data to (\ref{G-flow}), the manifold $\cM$ lies in the invariant plane $u_0+X_\cG.$
\vskip 0.1in
\noindent (\textbf{EA})
There exist positive parameters $\mu_e, \gamma_e$ such that for all $\Phi\in\cM$ we have
\beq
\label{e:equiv-norm}
 \left \langle \left(\nabla^2_X\mrI(\Phi)+\gamma_e\right) v, v\right\rangle_X \geq \mu_e \|v\|_H^2,
 \eeq
 for all $v\in H\cap X_\cG$ and all $\Phi\in \cM.$
 \vskip 0.1in
 
 \begin{remark}
 The assumption (\textbf{EH1}) implies that $\cT_\vp\subset X_\cG$ for all $\vp\in\cP$. One way to satisfy this assumption is
 to insert extra parameters, $\tvp$ into the ansatz $\Phi=\Phi(\vp,\tvp)$, and constrain $\vp$ and $\tvp$ to enforce $\Pi_\cG (\Phi-u_0)=0.$
 The key is to show that the reduced family of parameters satisfies the remaining hypotheses. This approach is employed in
 Section\,\ref{s:FCH}.  
 \end{remark}
 
To establish a non-trivial overlap between the assumptions of the SRN and the EL approaches we show that 
(\textbf{H0})-(\textbf{H4}), together with (\textbf{EH1}) and (\textbf{EA}),  imply  (\textbf{A1})-(\textbf{A2}). 
While the assumption (\textbf{A0}) is not required for the SRN approach, we show that there is a wide class of gradients
for which the EL approach applies. Indeed we fix a finite co-dimension  space $X_0\subset X$ with orthogonal projection $\Pi_0:X\mapsto X_0$ and a quasi-steady manifold $\cM$ and consider the class $\cC_{X_0}$ of non-negative, $X$-self adjoint gradients 
\beq
\label{e:cC-def}
 \cC_{X_0}=\{ \cG:H\mapsto X_0\, \bigl |\,  \textrm{ker}(\cG)=X_0^\perp; \cG^{-1}:X_0\mapsto \cD_\cG\subset X_0, X\textrm{-norm}\,\textrm{bounded}\}.
\eeq 
We show that the choice of gradient from this class has limited impact on the slow-flow result associated to the underlying low-energy manifold.

\begin{thm}
\label{t:KP-BFK}
Fix the space $X_0$ and the class of gradient $\cC_{X_0}$ as in (\ref{e:cC-def}). Suppose that the energy $\mrI$ and the 
manifold $\cM$ correspond to the framework of (\ref{G-flow}). 
If the hypotheses (\textbf{H0})-(\textbf{H4}), (\textbf{EH1}), and (\textbf{EA}) hold for this system with the gradient $\cG=\Pi_0$, then 
there exists a projection $\Pi_\cM$ for which (\textbf{A1})-(\textbf{A2}) are valid. Moreover assume (\textbf{A0}) holds and initial data $u_0$ satisfies 
(\ref{e:delta1-def}) with $\delta$, $\delta_0$, and $\delta_1$ sufficiently small that $\eta_*<\eta^*$. Then the corresponding solution 
$u(t)$ of (\ref{G-flow}) can be decomposed as in (\ref{e:Phi-w-decomp}) where the residual $w$ satisfies $\|w\|_H\leq \eta_*$ 
for all $t\in(0,\Texit)$ where $\Texit:= \inf\{t\,\bigl|\, \vp(t)\notin\cP\}.$
\end{thm}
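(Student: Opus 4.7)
My plan is to verify the EL hypotheses (A1)-(A2) from the SRN hypotheses (H0)-(H4), (EH1), and (EA) in the base case $\cG=\Pi_0$, and then apply Theorem\,\ref{t:BFK} to obtain the dynamical trapping and read off the decomposition (\ref{e:Phi-w-decomp}). The projection $\Pi_\cM$ is built from the SRN spectral decomposition: for $u$ in a small $H$-neighborhood of $\cM$, define $\vp^*(u)\in\cP$ as the unique solution of the orthogonality system
\begin{equation}
\langle u-\Phi(\vp^*),\psi_j(\vp^*)\rangle_X = 0,\qquad j=1,\ldots,n,
\end{equation}
and set $\Pi_\cM u := \Phi(\vp^*(u))$, $\tPi_\cM u := u - \Pi_\cM u$. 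The implicit function theorem applies: at a manifold point the Jacobian equals $-\langle \psi_j,\partial_{p_i}\Phi\rangle_X$, which by (H3)-(H4) is an $O(\delta)$ perturbation of a positive-definite Gram matrix and hence invertible for $\delta$ small. By construction $\tPi_\cM u \in Y_{\vp^*}^\perp \cap H = Y'_{\vp^*}$, matching the SRN complement. For (A1) I would compare $\vp^*(u)$ to the $H$-nearest-point parameter $\vp_0(u)$: since (H3) places $Y_\vp$ within $O(\delta)$ of $\cT(\vp)$ in the $H$-norm, the two orthogonality selections differ by $O(\delta)\,d_H(u,\cM)$ in parameter space, and the smoothness of $\Phi$ then yields $\|\tPi_\cM u\|_H \leq (1+O(\delta))\,d_H(u,\cM)$.

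For (A2), the small-residual bound (A21) rests on (EH1), which forces $\tPi_\cM u\in X_0$. Since $\cG=\Pi_0$ gives $\Pi_0\nabla_X\mrI(\Phi)=-\cR(\vp^*)$, we have
\begin{equation}
\langle\nabla_X\mrI(\Phi),\tPi_\cM u\rangle_X = \langle\Pi_0\nabla_X\mrI(\Phi),\tPi_\cM u\rangle_X = -\langle\cR(\vp^*),\tPi_\cM u\rangle_X,
\end{equation}
and (H0) together with the continuous embedding $H\hookrightarrow X$ gives $\delta_2 = O(\delta)$. For normal coercivity (A22), observe that with $\cG=\Pi_0$ the operator $\mbL|_{X_0} = -\Pi_0\nabla_X^2\mrI|_{X_0}$ is $X$-self-adjoint, so (H1) promotes to the quadratic-form inequality
\begin{equation}
\langle\nabla_X^2\mrI(\Phi)v,v\rangle_X \geq k_s\|v\|_X^2,\qquad v\in Y_{\vp^*}^\perp\cap X_0.
\end{equation}
Multiplying this by $\gamma_e/k_s$ and adding (EA) cancels the $\|v\|_X^2$ contributions and yields (A22) with $\mu_2 = k_s\mu_e/(k_s+\gamma_e)>0$. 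The bounded-nonlinearity condition (A23) with $\rho=3$ is a Taylor-remainder estimate for $\mrI$ that requires $\nabla_X^3\mrI$ to act as a bounded trilinear form on $H$, which is implicit in (\ref{e:SRN-nonlin}).

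With (A0) given and (A1)-(A2) verified, Theorem\,\ref{t:BFK} applies. The non-increase of $\mrI$ along (\ref{G-flow}) together with the initial bound (\ref{e:delta1-def}) keeps the energy hypothesis in force for all positive times, so the implication $d_H(u,\cM) < \eta^* \Rightarrow d_H(u,\cM) < \eta_*$ propagates as long as the projection is defined, i.e., as long as $\vp(t) := \vp^*(u(t))\in\cP$. Writing $w(t) := u(t) - \Phi(\vp(t))$ then delivers (\ref{e:Phi-w-decomp}) with $\|w\|_H \leq \eta_*$ on $(0,\Texit)$. The main obstacle, and the step most sensitive to the choice of gradient, is (A22): (H1) is a spectral statement about $\mbL = -\cG\nabla_X^2\mrI$, whereas (A22) demands a coercivity estimate for $\nabla_X^2\mrI$ itself, and the passage between the two is clean only when $\mbL$ is symmetric in a canonical inner product. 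The choice $\cG=\Pi_0$ is precisely what makes $\mbL|_{X_0}$ $X$-self-adjoint, after which (EA) bridges from the $X$-norm spectral bound up to the $H$-norm coercivity required by (A22). For general $\cG \in \cC_{X_0}$ this direct symmetry is lost, and recovering the quadratic-form estimate is the work of the symmetrization developed in Section\,\ref{s:GISF}.
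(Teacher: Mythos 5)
Your proposal is correct in outline and reaches the same conclusion by the same overall strategy (verify (\textbf{A1})--(\textbf{A2}) from the SRN hypotheses, then invoke Theorem\,\ref{t:BFK}), but the key coercivity step is handled by a genuinely different route. The paper takes the projection from Proposition 2.2 of \cite{Prom-02}, which places $\tPi_\cM u$ in $\cT_\vp^{\perp}$, the orthogonal complement of the \emph{tangent plane}; since (\textbf{H1}) only localizes the spectrum of $\mbL$ relative to the slow eigenspace $Y_\vp$, the paper must then transfer that information to $\cT_\vp^{\perp}$ via the constrained eigenvalue count $\mn(\Pi_V L\Pi_V)=\mn(L)-\mn(D)$ of \cite[Prop.~5.3.1]{KapProbook}, computing the constraint matrix $D(\tilde\mu)=\tilde\mu^{-1}I+\cO(\delta)$ explicitly. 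You instead define the projection by orthogonality to the slow eigenfunctions, so that $\tPi_\cM u\in Y_{\vp^*}^{\perp}\cap X_0$, and then the $X$-coercivity with constant $k_s$ is immediate from the self-adjointness of $\mbL|_{X_0}$ and the spectral gap -- no index theory needed. Both arguments finish with the same convex-combination step against (\textbf{EA}) to upgrade to $H$-coercivity. Your version is shorter and is consistent with the decomposition $w\in Y'_{\vp}$ used in Theorem\,\ref{t:KP}; the paper's version keeps the projection geometric (tied to $\cT_\vp$ rather than to the eigenfunctions of the particular linearization at $\cG=\Pi_0$), which is closer in spirit to the EL framework. Two small imprecisions to fix: for (\textbf{A1}) the nearest-point condition is an $H$-orthogonality against $\cT_\vp$ while yours is an $X$-orthogonality against $Y_{\vp^*}$, so the two parameter selections differ by $\cO(1)\,d_H(u,\cM)$, not $\cO(\delta)\,d_H(u,\cM)$; this still yields (\textbf{A1}) with some constant $c_1$, which is all that is required. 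And for (\textbf{A23}) you should not assert $\rho=3$; the paper obtains $\cN_E(v)=-\int_0^1\langle\cN_S(\Phi,sv),v\rangle_X\,\mathrm{d}s$ from the fundamental theorem of calculus, whence (\ref{e:SRN-nonlin}) gives $\rho=r+1>2$ directly, with no separate hypothesis on the third variation.
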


\begin{remark}
If the assumptions of Theorem\,\ref{t:KP} hold for the gradient $\cG=\Pi_{X_0}$ then one recovers the attraction of an $O(\eta_0)$ $H$-neighborhood
of $\cM$ into an $O(\delta)$ $H$-neighborhood of the manifold, as well as the leading order asymptotics of the flow projected onto the
tangent plane of the manifold, so long as $\vp\in\cP$. For the flows produced by the other gradients $\cG\in\cC_{X_0}$ one recovers the forward invariance of a generically wider $O(\eta_*)$  $H-$neighborhood of $\cM$, up to the boundary of $\cM$, however the decomposition of the solution
into modes tangential and normal to $\cM$ is generically not accurate enough to recover the leading order projection of $u_t$ onto the
tangent plane of the $\cM$, but do afford lower bounds on the exit time, as given in \cite[Theorem 2.2]{BFK-18}.
\end{remark}
\begin{proof}
We assume the existence of a quasi-steady manifold, $\cM$ that verifies (\textbf{H0})-(\textbf{H4}) for $F=\Pi_0 \nabla_X J$.  
The existence of the projection $\Pi_\cM$ is established in Proposition\, 2.2 of \cite{Prom-02}. In particular this result establishes
the existence of an $\eta_0>0$ for which  $u\in X$ with $d_X(u,\cM)\leq \eta_0$ can be decomposed as
$u=\Phi(\vp_*)+\eta_0 \hat{W}_0$, with $\|\hat{W_0}\|_X\leq 1$. Moreover it
establishes the existence of a function $\hat{\vp}=\hat{\vp}(u)=\vp_*+\eta_0 \cH(\hat{W})$ with $\cH(0)=0$, $\cH$
smooth in the $H$ norm, and for which the projection $\Pi_\cM u := \Phi(\hat{\vp}(u))$ enjoys the property 
$\tPi_\cM u \in \cT_\cM^\perp(\hat{\vp}).$  By the triangle inequality we deduce that
\beq
\|\tPi_\cM u\|_H \leq \| u-\Phi(\vp_*)\|_H + \| \Phi(\vp_*)-\Phi(\hat{\vp})\|_H= d_H(u,\cM) + \| \Phi(\vp_*)-\Phi(\hat{\vp})\|_H.
\eeq
Since $\cH$ is smooth there exists $M_0>0$ such that 
$$|\vp_*-\hat{\vp}|\leq \eta_0 M_0 \|\hat{W}_0\|_H\leq M_0 d_H(u,\cM).$$
Since $\Phi$ is a smooth function of $\vp$ we deduce that (\textbf{A1}) holds with $\eta=\eta_0$ for $\eta_0$ sufficiently small. 

For the gradient flow, (\ref{G-flow}), the choice of gradient $\cG=\Pi_0$ reduces to the identity on $X_0$. This affords the 
identification
\beq
\Pi_0 \nabla_X \mrI = -\cG^{-1} F(u)= -\Pi_0 F(u)=-F(u).
\eeq
As the space $u_0+X_0$ is invariant under the flow, it is sufficient to establish the bounds (\textbf{A2}) on $X_0$.
Indeed, writing $u=\Phi +v$ with $\Phi\in\cM$, by (\textbf{EH1}) we have $\Phi-u_0\in X_0$,  so that $\Pi_\cM u =\Phi\in X_0$ and 
$v=\tPi_\cM u\in X_0$. We may use the expansion (\ref{e:F-taylor}) to write
\beq
\Pi_0\nabla_X \mrI (\Phi+v) =-\Pi_{0}\cR(\vp)  - \Pi_{0}\mbL_\vp v -\Pi_{0}\cN_S(\Phi_\vp; v),
\eeq
where $\mbL$ denotes the linearization of the full gradients flow $F$ at $\Phi_\vp$.
 Comparing this with the expansion (\ref{e:A23}) and using the fundamental theorem of calculus we find for each $v\in H\cap X_0$, 
 that the expansion holds with
\beq
\cN_E(v) := -\int_0^1 \left\langle \cN_S(\Phi,sv),v\right\rangle_X\, \mathrm{d}s.
\eeq
Since the $H$-norm controls the $X$-norm, and $\cN_S$ satisfies (\ref{e:SRN-nonlin}) we  determine that (\ref{e:A23}) holds with 
$\rho=r+1>2$ on $X_\cG,$  which is consistent with the application of Theorem\,\ref{t:BFK}.  
Since $\nabla_X \mrI(\Phi(\vp))=-\Pi_0\cR(\vp)$, the bound (\ref{e:H0}) implies that the small residual assumption (\ref{e:A21}) holds with $\delta_2=c_0\delta$. To establish assumption (\textbf{A2}) it remains to verify the coercivity estimate (\ref{e:A22}) which we establish in Lemma\,\ref{l:coercivity}.

The second variation of $J$ at a point $\Phi_\vp$ on $\cM$ with perturbations taken from the constrained set $X _0$, induces the constrained operator
\beq
\label{e:cL-def}
\nabla_{X_0}^2 \mrI(\Phi(\vp))= -\Pi_0\mbL_\vp=-\Pi_0\mbL_\vp\Pi_0. 
\eeq
 
\begin{lem}
{\label{l:coercivity}} 
Assume  (\textbf{H0})-(\textbf{H4}), (\textbf{EH1}),  and (\textbf{EA}) hold then the manifold is normally $H$-coercive. That is exists a 
$\mu>0$ such that for all $\vp\in\cP$ the bilinear form (\ref{e:A22}) induced by the constrained second variation $\cL$ of $\mrI$ at 
$\Phi(\vp)$  satisfies
\begin{equation}
\left\langle{-\mbL v,v}\right\rangle_X\geq\mu{\|v\|^2_{H}},\label{H2-coer}
\end{equation}
for all $v\in \cT_{\vp}^{\perpX}. $
\end{lem}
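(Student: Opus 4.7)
The plan is to combine the spectral gap in (\textbf{H1})--(\textbf{H2}) with the equivalent-norm bound (\textbf{EA}) to establish $H$-coercivity of the quadratic form $\langle -\mbL_\vp v,v\rangle_X$ on the spectral complement $Y_\vp'$, and then transfer this bound to the tangent complement $\cT_\vp^\perp$ using the $O(\delta)$ closeness of $Y_\vp$ and $\cT_\vp$ from (\textbf{H3}). Under (\textbf{EH1}) the tangent space $\cT_\vp$ lies in $X_0 = X_\cG$, and the choice $\cG = \Pi_0$ makes $-\mbL_\vp = \Pi_0\nabla_X^2\mrI(\Phi_\vp)\Pi_0$ an $X$-self-adjoint operator on $X_0$. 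This symmetry is the structural fact the entire argument exploits; throughout I work with $v \in \cT_\vp^{\perpX}\cap X_0$, which is the relevant subspace in view of (\textbf{EH1}).

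First, apply the spectral theorem to $-\mbL_\vp$ restricted to $Y_\vp'$. By (\textbf{H1}) the stable spectrum lies in $\{\lambda\le-k_s\}$, giving the $X$-coercivity
\[ \langle -\mbL_\vp v,v\rangle_X \geq k_s\|v\|_X^2, \qquad v\in Y_\vp'. \]
Independently, (\textbf{EA}) together with the identity $\nabla_X^2\mrI(\Phi_\vp)v = -\mbL_\vp v$ on $X_0$ yields
\[ \langle -\mbL_\vp v,v\rangle_X \geq \mu_e\|v\|_H^2 - \gamma_e\|v\|_X^2, \qquad v\in X_0. \]
Taking the convex combination with weights $\gamma_e/(k_s+\gamma_e)$ on the first and $k_s/(k_s+\gamma_e)$ on the second cancels the $\|v\|_X^2$ contribution and delivers $H$-coercivity on $Y_\vp'$ with constant $\mu_0 := k_s\mu_e/(k_s+\gamma_e)$.

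To transfer the bound to $\cT_\vp^{\perpX}\cap X_0$, I choose $\{\psi_i\}_{i=1}^n$ to be an $X$-orthonormal basis of $Y_\vp$ and decompose $v = v_Y + v_Y'$ along the $X$-orthogonal splitting $X_0 = Y_\vp\oplus Y_\vp'$. Since $v$ is $X$-orthogonal to $\partial_i\Phi$ for each $i$, one has $\langle v,\psi_i\rangle_X = \langle v,\psi_i-\partial_i\Phi\rangle_X$; combining (\textbf{H3}), the continuous embedding $H\hookrightarrow X$, and (\textbf{H4}), a short Cauchy--Schwarz estimate gives $\|v_Y\|_H\leq C_*\delta\|v\|_H$, so $\|v_Y'\|_H\geq(1-C_*\delta)\|v\|_H$. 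The $X$-self-adjointness of $-\mbL_\vp$ on $X_0$ together with the spectral invariance of $Y_\vp$ and $Y_\vp'$ makes the cross term $\langle -\mbL_\vp v_Y,v_Y'\rangle_X$ vanish exactly, while the slow-spectrum bound $|\lambda|\le c_0\delta$ controls the slow piece by $|\langle -\mbL_\vp v_Y,v_Y\rangle_X|\leq c_0\delta\|v_Y\|_X^2 = O(\delta^3)\|v\|_H^2$. Assembling these and choosing $\delta$ small enough yields $\langle -\mbL_\vp v,v\rangle_X \geq \mu\|v\|_H^2$ with $\mu = \mu_0/2$.

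The main obstacle is the transfer from the spectral complement $Y_\vp'$ to the geometric complement $\cT_\vp^{\perpX}$: without the $X$-self-adjointness of $-\mbL_\vp$ on $X_0$ the cross term $\langle -\mbL_\vp v_Y,v_Y'\rangle_X$ would only be $O(\delta\|v\|_H^2)$ by Cauchy--Schwarz and could easily consume the $H$-coercivity obtained on $Y_\vp'$. The fact that the base-case gradient $\cG = \Pi_0$ preserves the symmetry of the Hessian $\nabla_X^2\mrI$ is precisely what makes this cross term vanish, and is the technical reason the coercivity argument is clean in this setting. A secondary point is that one needs the eigenbasis $\{\psi_i\}$ to be simultaneously $X$-orthonormal (for the Parseval identity controlling $\|v_Y\|_X$) and $H$-bounded via (\textbf{H4}) (to pass from the $X$-bound on $v_Y$ to an $H$-bound).
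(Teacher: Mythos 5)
Your proof is correct, but it reaches the $X$-coercivity on $\cT_{\vp}^{\perpX}$ by a genuinely different route than the paper. The paper works directly on the geometric complement: it restricts the form of $-(\mbL-\tilde\mu)$ to $\cT_\vp^{\perpX}$ and invokes the constrained negative-index formula $\mn(\Pi_V L \Pi_V)=\mn(L)-\mn(D)$ (Proposition 5.3.1 of \cite{KapProbook}), computing that both $\mn(L)=n$ and $\mn(D)=n$ because the constraint matrix $D_{ij}=\langle s_i, L^{-1}s_j\rangle$ is, by (\textbf{H1}) and (\textbf{H3}), a diagonal matrix of definite sign up to $\cO(\delta)$; the coercivity constant is then a spectral $\tilde\mu\in(k_s/2,k_s)$. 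You instead get coercivity for free on the spectral complement $Y_\vp'$ and transfer it to $\cT_\vp^{\perpX}\cap X_0$ by the decomposition $v=v_Y+v_{Y'}$, using (\textbf{H3})--(\textbf{H4}) to show $\|v_Y\|_H=\cO(\delta)\|v\|_H$, the exact vanishing of the cross term by self-adjointness and spectral invariance, and the $\cO(\delta)$ bound on the slow eigenvalues to absorb $\langle -\mbL v_Y,v_Y\rangle_X$. Both arguments hinge on the same structural fact that $-\mbL=\Pi_0\nabla_X^2\mrI\,\Pi_0$ is $X$-self-adjoint on $X_0$ for the base-case gradient, and both finish with the identical convex-combination step that trades the $\gamma_e\|v\|_X^2$ deficit in (\textbf{EA}) against the $X$-coercivity to produce $\mu=\tilde\mu\mu_e/(\tilde\mu+\gamma_e)$ (your $\mu_0=k_s\mu_e/(k_s+\gamma_e)$ is the same computation with $k_s$ in place of $\tilde\mu$). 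Your version is more elementary and self-contained, needing only the spectral theorem and a perturbation of subspaces, at the cost of tracking several $\cO(\delta)$ error terms and requiring $\delta$ small at this stage; the paper's index count is exact (no smallness of $\delta$ enters beyond making $D$ diagonally dominant), packages the subspace mismatch into the signature of a single $n\times n$ matrix, and deliberately showcases the GSS-type constrained eigenvalue machinery that the introduction identifies as the bridge between the SRN and EL hypotheses.
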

\begin{proof}
By construction of the projection and (\textbf{EH1}), $\textrm{Range}(\tPi_\cM(\vp)) = \cT_{\vp}^{\perpX}\subset X_0.$ We first establish $X$ coercivity of 
$-\mbL$ on $\cT_{\vp}^{\perpX}$ by finding a $\tilde{\mu}>0$ such that 
\begin{equation}
\langle -(\mbL-\tilde{\mu})v,v\rangle_X \geq 0,
\end{equation}
for all $v\in \cT_{\vp}^{\perpX}$. We introduce the bilinear form
\beq
b[v,w]:= \langle -(\mbL-\tilde{\mu})v,w\rangle_X,
\eeq
associated to $-(\mbL-\tilde{\mu})$. Restricting the bilinear form to $\cT_{\vp}^{\perpX}$, induces the constrained operator 
$-\tPi_\cM (\mbL-\tilde{\mu}) \tPi_\cM.$  We remark from hypothesis (\textbf{H1}) that $-\mbL_\vp$ has a finite number of negative
eigenvalues.  The  $X$-coercivity of $-\mbL$ is equivalent to the the statement $\mn(-\tPi_\cM (\mbL-\tilde{\mu}) \tPi_\cM) =0$, where 
the negative index $\mn(L)$ denotes the number of negative eigenvalues of a self-adjoint operator $L$ counted according to multiplicity.

We apply Proposition 5.3.1 of \cite{KapProbook}, which equates  the number of the negative eigenvalues of a constrained operator to the difference of the number of the negative 
eigenvalues of the operator and an associated constraint matrix. More specifically, given an invertible, $X$-self-adjoint operator $L$ and an orthogonal projection $\Pi_V$ onto a finite-codimension 
subspace $V\subset X$.  Then the number of negative eigenvalues of the constrained operator $\Pi_V L \Pi_V$, as a map from $V\mapsto V$, is given by
\beq
\mn (\Pi_V L\Pi_V ) =\mn(L)-\mn(D),\label{constaint-index}
\eeq
where the finite-dimensional constraint matrix $D$ is defined by
\begin{equation}
D_{ij} \coloneqq \langle s_i, L^{-1} s_j \rangle, \hspace{0.5in} \textrm{for}\, i,j =1, \ldots, n
\label{constraint-matrix}
\end{equation}
where $\{s_i\}_{i=1}^n$ is a basis for $V^{\perpX}$. We apply this theorem with $L=-(\mbL-\tilde{\mu})$, $X=X_0$, and $V=\cT_{\vp}.$ 
From (\textbf{H1}), for $\tilde\mu\in(k_s/2, k_s)$, we have $\mn(-(\mbL-\tilde{\mu}))=n.$

To determine $\mn(D(\tilde{\mu}))$, from \textbf{(H1)} and \textbf{(H3)} the slow-space eigenfunctions of $-(\mbL-\tilde{\mu})$  
take the form $\psi_{i}= s_i +\psi_i^\perp$ where $\|\psi_i^\perp\|_H = \cO(\delta)$, and $s_i:= \frac {\partial\Phi}{\partial p_i}.$ 
We denote the slow-eigenvalues of $\mbL$ by $\{\lambda_1, \ldots, \lambda_n\}$. Since $-(\mbL-\tilde{\mu})$ has an $\cO(1)$ inverse we 
deduce that
\begin{equation}
D_{ij}(\tilde{\mu})=\left\langle s_i, -(\mbL-\tilde\mu)^{-1} s_j \right\rangle_X =\left \langle s_i, -(\lambda_i-\tilde{\mu})^{-1}\phi_i \right\rangle_X+\cO(\delta)= \frac{-1}{\lambda_i-\tilde{\mu}} \delta_{ij} + \cO(\delta).
\end{equation}
From  \textbf{(H1)} we have $|\lambda_i|=\cO(\delta)$ and hence $D(\tilde{\mu})=\frac{1}{\tilde{\mu}} I_{n\times n} +\cO(\delta)$ and $\mn(D(\tilde{\mu}))=n.$ From the variational
formulation of eigenvalues we deduce that 
\beq
 \left\langle{-\mbL v,v}\right\rangle_X\geq\tilde{\mu}\|v\|^2_X.
 \eeq
 for $v\in \cT_{\vp}^{\perpX}$.
 
To establish the $H$ coercivity. We introduce $\alpha\in(0,1)$ and write
 \beq
  \left\langle{-\mbL v,v}\right\rangle_X = \alpha \left(   \left\langle{-\mbL v,v}\right\rangle_X + \frac{1-\alpha}{\alpha}   \left\langle{-\mbL v,v}\right\rangle_X\right) \geq 
                               \alpha \left(   \left\langle{\cL v,v}\right\rangle_X + \frac{(1-\alpha)\tilde{\mu}}{\alpha}   \left\| v\right\|_X^2\right). 
 \eeq
 Choosing $\alpha= \frac{\tilde{\mu}}{\tilde{\mu}+\gamma_e}$ we have $ \frac{(1-\alpha)\tilde{\mu}}{\alpha}=\gamma_e$. Applying  
 (\ref{e:equiv-norm}) of (\textbf{EA}) we deduce
\beq 
 \left\langle{-\mbL v,v}\right\rangle_X \geq  \frac{\tilde{\mu}}{\tilde{\mu}+\gamma_e} \mu_e \|v\|_H^2,
 \eeq
  which establishes (\ref{H2-coer})  with $\mu =  \frac{\tilde{\mu}\mu_e }{\tilde{\mu}+\gamma_e}.$ 
\end{proof}

Returning to the proof of Theorem\,\ref{t:KP-BFK}, we consider (\ref{G-flow}) with any gradient $\cG\in\cC_{X_0}$ and
deduce that Theorem\,\ref{t:BFK} holds with $\eta^*=\eta_0$ as given by Theorem\,\ref{t:KP} and $\eta_*$ given by (\ref{e:eta_*-def}) 
so long as $\delta, \delta_0, $ and $\delta_1$ are sufficiently small that $\eta_*<\eta_0$. From Theorem\,\ref{t:BFK} it follows that
the solution $u=u(t)$ of (\ref{G-flow}) can be decomposed as $u(t)=\Phi(\vp(t))+ w$ where $w=\tPi_\cM u(t)$ satisfies $\|w\|_H\leq \eta_*$,
so long as $\vp\in\cP.$ 
 \end{proof}
 
\subsection{Gradient Invariance of Slow Flows}
\label{s:GISF}
We extend the applicability of the SRN approach to a class of gradients the includes $\Pi_0$, and shares its kernel. This
class is more restrictive than $\cC_{X_0}$ given in (\ref{e:cC-def}).  For for all $t>0$ the solution $u$ of (\ref{G-flow}) satisfies 
$u(t)-\cM\in X_0$.   This motivates the decomposition
\beq 
\label{e:cGi-decomp}
u = \Phi(\cdot;\vp) +  \rho^{-1} \cG_1 w,
\eeq
where $w\in H_{\cG_1}\subset X_0$ satisfies $w\bot \cG_1^{-1} \cT.$
The scaling parameter $\rho\gg 1$ is included to allow the incorporation of singularly perturbed energies such as the FCH whose 
differential operators are homogeneously scaled by the small parameter $\eps\ll1$.  The operator $\cG_1$ is defined as the
square root of $\cG$ and the space $H_{\cG_1}$ denotes the functions in $H$ for which the norm  $\|w\|_{H_{\cG_1}}:= \|\cG_1 w\|_H,$
is finite. 

With this decomposition we re-write the gradient flow
\beq
\label{e:G-flow2}
 u_t = -\cG_1^2 \nabla_X \mrI(u),
\eeq
as
\beq
\label{e:cG-w}
 \rho \cG_1^{-1} \nabla_\vp \Phi \cdot \dot{\vp} + w_t = -\rho \cG_1\cR - \cG_1 \cL \cG_1 w - \rho \cG_1\cN_S(\rho^{-1}\cG_1 w),
 \eeq
 where $\cL=\nabla_XJ(\Phi_\vp)$ is the second variation of $J$ in the $X$-inner product.
The key point is that the linear operator $\mbL:=\cG_1\cL\cG_1$ has been symmetrized and the nonlinearity has been scaled.
Indeed, comparing to the base case $\cG=\Pi_0$, we see that the tangent plane $\nabla_\vp\Phi$ has been scaled and mapped to $\cG_1^{-1}\nabla_\vp\Phi$,
and the residual is scaled and mapped by $\cG_1$.
 
We have the following immediate result
\begin{cor}
\label{c:cG-coercivity}
There exists $\mu_\cG>0$ such that the bilinear form
$$b_{\cG_1}(w,w):= \langle \cG_1\cL\cG_1 w,w \rangle_X \geq \mu  \|\cG_1 w\|_H^2\geq \mu_\cG\|w\|_H^2,$$
for all $w\in(\cG_1^{-1} \cT)^\bot\cap H_{\cG_1}$. Here $\mu$ is the coercivity constant from Lemma\,\ref{l:coercivity}.
\end{cor}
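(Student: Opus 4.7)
The estimate follows from a change of variables that symmetrizes the operator and reduces matters to Lemma~\ref{l:coercivity}. I would proceed in two steps.

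First, using the $X$-self-adjointness of $\cG_1$ (the positive square root of the non-negative, $X$-self-adjoint $\cG$), I would set $v := \cG_1 w$ and rewrite
$$ b_{\cG_1}(w,w) = \langle \cG_1\cL\cG_1 w, w\rangle_X = \langle \cL v, v\rangle_X.$$
To invoke Lemma~\ref{l:coercivity} I need $v \in \cT_\vp^{\perpX}$. The orthogonality hypothesis $w \perp \cG_1^{-1}\cT$ is designed precisely so that this holds: interpreting the perpendicularity in the inner product $\langle a,b\rangle_{\cG^{-1}} := \langle \cG_1 a, \cG_1 b\rangle_X$ naturally associated with $H_{\cG_1}$, one has the elementary duality
$$ \langle w, \cG_1^{-1} s\rangle_{\cG^{-1}} = \langle \cG_1 w, s\rangle_X \quad \text{for } s\in \cT_\vp, $$
so that $w \perp \cG_1^{-1}\cT_\vp$ translates into $v \in \cT_\vp^{\perpX}$. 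Lemma~\ref{l:coercivity} then yields
$$ \langle \cL v, v\rangle_X \geq \mu \|v\|_H^2 = \mu \|\cG_1 w\|_H^2, $$
which is the first inequality.

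For the second inequality, I would use a lower bound of $\cG_1$ as an operator on $H$. The standing assumption that $\cG^{-1}$ is bounded on the relevant subspace, combined with the refined structure on the class of gradients imposed in Section~\ref{s:GISF} (extending the $X$-norm bound of $\cC_{X_0}$ to an $H$-norm bound), gives, via the spectral calculus applied to the positive self-adjoint operator $\cG$, a constant $c_\cG > 0$ with $\|\cG_1 w\|_H \geq c_\cG \|w\|_H$ on $H_{\cG_1}\cap X_0$. Combining with the previous step, the chained estimate holds with $\mu_\cG := \mu\, c_\cG^2$.

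The main obstacle is not a calculation but a bookkeeping point: verifying that the orthogonality convention for the $w$-variable is the one under which $v = \cG_1 w$ carries the constrained admissible space for $w$ onto the space $\cT_\vp^{\perpX}$ on which Lemma~\ref{l:coercivity} operates. Once that identification is in place, the proof reduces to the symmetrization rewrite, an application of Lemma~\ref{l:coercivity}, and the $H$-norm lower bound on $\cG_1$ inherited from the gradient class under consideration.
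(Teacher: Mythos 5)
Your proof is correct and follows essentially the same route as the paper: substitute $v=\cG_1 w$ to rewrite the form as $\langle \cL v,v\rangle_X$, check that the orthogonality constraint on $w$ places $v$ in $\cT_\vp^{\perpX}$ so that Lemma~\ref{l:coercivity} applies, and then convert $\|\cG_1 w\|_H$ into $\|w\|_H$ via the boundedness of $\cG_1^{-1}$ on $X_0\cap H$ (the paper's constant is $\mu_\cG=\mu/M^2$ with $M$ the operator bound of $\cG_1^{-1}$, matching your $\mu c_\cG^2$). Your explicit bookkeeping of the inner product in which $w\perp\cG_1^{-1}\cT$ is taken is a welcome clarification of a point the paper leaves implicit, but it is not a different argument.
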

\begin{proof}
Since $w\in (\cG_1^{-1} \cT)^\bot\cap X_0$, we have $w=\cG_1^{-1} v$ where $v\bot \cT.$ In particular
$$  \langle \cG_1\cL\cG_1 w,w \rangle_X = \langle \cL v, v\rangle_X \geq \mu \|v\|_H^2= \mu \|\cG_1 w\|_H^2\geq \frac{\mu}{M^2} \|w\|_H^2,$$
where $M$ is the bound on $\cG_1^{-1}:X_0\cap H\mapsto X_0\cap H$.
\end{proof}
Without loss of generality we may rescale  both $\cG$ and the temporal variable so that the $X$-operator norm of $\cG_1^{-1}$ is 
bounded sharply by the constant $1$ on its domain $X_0$. To recover the leading order reduced flow we require two extra assumptions
that constrain the choice of $\rho$, which must satisfy $\delta_\cG:=\delta \rho^3 \ll1.$
\vskip 0.1in
\noindent (\textbf{EH2})
There exists $c>0$, independent of $\rho\gg 1$ for which the nonlinearity $\cN_S$ introduced in (\ref{e:SRN-nonlin}) satisfies
\beq
\label{e:EH2}
\rho \|\cG_1\cN_S(\rho^{-1} \cG_1 w) \|_{H_{\cG_1}} \leq c \|w\|_{H_{\cG_1}}^2.
\eeq
\vskip 0.1in

\noindent (\textbf{EH3})
There exists a constant $c>0$, independent of $\rho$, for which the following estimates
\beq
\label{e:EH3-1}
    \|\cG_1 \nabla_X \mrI(\Phi(\vp))\|_{H_{\cG_1}} \leq c \rho^2 \delta,
\eeq
and
\beq
\label{e:EH3-2}
 \hspace{1.5in}  \| \cG_1 u\|_X \leq c \rho \|u\|_X,\hspace{1.0in} \forall  u \in\cT_\vp,
\eeq
hold uniformly for $\vp\in\cP.$
\vskip 0.1in

\begin{thm}
\label{t:dyn-equiv}
Assume that Theorem\,\ref{t:KP} and its hypotheses hold for the choice of gradient $\cG=\Pi_0.$ If in addition
hypotheses (\textbf{EH2}) and (\textbf{EH3}) hold for parameters $\rho$ and $\delta$ satisfying $\rho\gg1$ and 
$\delta_\cG:=\delta \rho^3\ll1$, then the flow (\ref{e:cG-w}) satisfies the hypotheses (\textbf{H0})-(\textbf{H4}) for the pair $H_{\cG_1}\subset X$ with 
$\delta$ replaced by $\delta_\cG$ and a reparameterization of the manifold $\cM$ through a smooth transformation $\tvp=\tvp(\vp)$. The solution 
$u$ of (\ref{G-flow}) can be decomposed as (\ref{e:cGi-decomp}) where $\tw:=\rho^{-1} w$ satisfies the bounds (\ref{e:w-eq}) in the norm $H_\cG$ and the rescaled parameters $\tvp$ satisfy
\begin{equation}
\label{PulseODE-cG1}
\dot{\tp}_i=\Bigg\langle \cR(\tvp),\frac{\partial \Phi}{\partial \tp_i}\Bigg\rangle_{X}+\cO(\delta_\cG^{1+r},\delta_\cG^2).
\end{equation}
\end{thm}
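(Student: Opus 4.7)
The plan is to verify hypotheses (\textbf{H0})--(\textbf{H4}) directly for the symmetrized system (\ref{e:cG-w}) in the Hilbert-space pair $H_{\cG_1}\subset X$, with the small parameter $\delta$ replaced by $\delta_\cG=\delta\rho^3$, and then invoke Theorem \ref{t:KP} to extract both the decomposition bound and the reduced ODE (\ref{PulseODE-cG1}). The central structural observation is that the symmetrized linearization $\mbL_\cG:=\cG_1\cL\cG_1$ is $X$-self-adjoint; together with Corollary \ref{c:cG-coercivity}, this makes the spectral hypotheses essentially automatic once the correct tangent basis is identified.

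For (\textbf{H0}), the residual of (\ref{e:cG-w}) is $\rho\cG_1\cR(\vp)$, and (\textbf{EH3-1}) gives $\|\rho\cG_1\cR\|_{H_{\cG_1}}\leq c\rho^3\delta= c\delta_\cG$. For (\textbf{H1}) and (\textbf{H2}), Corollary \ref{c:cG-coercivity} yields uniform coercivity of $-\mbL_\cG$ on $(\cG_1^{-1}\cT_\vp)^\perp\cap H_{\cG_1}$, with a coercivity constant independent of $\vp\in\cP$; because $\mbL_\cG$ is $X$-self-adjoint, the spectral theorem converts this coercivity into a genuine spectral gap and the corresponding semigroup estimate on the orthogonal complement of its slow eigenspace $Y_\vq^\cG$, which has dimension $n$ by the constrained-index count already used in the proof of Lemma \ref{l:coercivity}. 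For (\textbf{H4}), the normalized slow eigenvectors inherit smooth dependence on $\vp$ from $\Phi$ and from the analytic perturbation theory of isolated finite-multiplicity eigenvalues of $\mbL_\cG$, and (\textbf{EH3-2}) controls the $H_{\cG_1}$-norm of $\cG_1^{-1}\partial_{p_i}\Phi$ uniformly in $\rho$.

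The delicate step is (\textbf{H3}), and it is precisely here that the reparameterization $\tvp=\tvp(\vp)$ enters. Because the $\dot\vp$ term in (\ref{e:cG-w}) is premultiplied by $\rho\cG_1^{-1}$, the natural tangent basis in the symmetrized frame is the collection $\{\rho\cG_1^{-1}\partial_{p_i}\Phi\}$, not $\{\partial_{p_i}\Phi\}$. I would first show that, to leading order, the slow eigenvectors of $\mbL_\cG$ satisfy $\psi_i^\cG=\cG_1^{-1}s_i+O(\delta_\cG)$ in $H_{\cG_1}$, where $\{s_i\}$ is a basis of $\cT_\vp$: this is a standard perturbative computation using (H3) for the base-case gradient, the self-adjointness of $\mbL_\cG$, and the smallness of the $X$-residual of $\cG_1^{-1}s_i$ with respect to $\mbL_\cG$ (which is driven by $\cR$ and hence by $\delta_\cG$). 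The reparameterization $\tvp(\vp)$ is then chosen to be the smooth change of basis on $\cT_\vp$ whose Gram matrix in the $\cG$-inner product is the identity, so that the rescaled tangent vectors $\partial_{\tilde p_i}\Phi$ (viewed after the $\rho\cG_1^{-1}$ rescaling) coincide with an orthonormal $H_{\cG_1}$-basis of $Y_\vp^\cG$ up to $O(\delta_\cG)$. Smoothness and invertibility of $\tvp\mapsto\vp$ follow from smoothness of $\Phi$ and uniform positive-definiteness of the Gram matrix, the latter guaranteed by (\textbf{EH3-2}).

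With (\textbf{H0})--(\textbf{H4}) in hand, Theorem \ref{t:KP} applied to (\ref{e:cG-w}) yields $\|w\|_{H_{\cG_1}}=O(\delta_\cG)$ after a short transient and, setting $\tw=\rho^{-1}w$, the stated bound in $H_\cG$. For the reduced ODE (\ref{PulseODE-cG1}), I would project (\ref{e:cG-w}) onto $\psi_i^\cG$ and use the identification from the previous step: the inner product $\langle\rho\cG_1\cR,\psi_i^\cG\rangle_X$ reduces to $\langle\cR,\partial\Phi/\partial\tilde p_i\rangle_X$ plus $O(\delta_\cG^2)$, while the nonlinear term is controlled by (\textbf{EH2}) as $O(\|w\|_{H_{\cG_1}}^2)=O(\delta_\cG^2)$ and the quadratic interaction of $w$ with the slow residual contributes $O(\delta_\cG^{1+r})$. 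The main obstacle is the bookkeeping in this last step: every power of $\rho$ has to be absorbed cleanly into the rescaling of $\delta$ to $\delta_\cG$, which is where the precise calibration of (\textbf{EH2}), (\textbf{EH3-1}), and (\textbf{EH3-2}) against the condition $\delta\rho^3\ll 1$ becomes essential.
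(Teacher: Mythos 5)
Your proposal follows essentially the same route as the paper's proof: you verify (\textbf{H0})--(\textbf{H4}) for the symmetrized system (\ref{e:cG-w}) by identifying the slow eigenspace of $\cG_1\cL\cG_1$ with $\cG_1^{-1}\cT_\vp$ up to $\cO(\delta_\cG)$ errors (using Corollary \ref{c:cG-coercivity}, self-adjointness, and the residual smallness supplied by (\textbf{EH2})--(\textbf{EH3})), reparameterize to normalize the transported tangent basis, and then invoke Theorem \ref{t:KP} with the $\rho$ factors cancelling in the projected ODE. The only cosmetic difference is that the paper carries out the eigenfunction perturbation argument explicitly as a two-component projection of the eigenvalue problem before checking the hypotheses, whereas you fold the same computation into the verification of (\textbf{H3}); the substance is identical.
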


\begin{remark}
Within this framework the impact of the change of gradient in to rescale the pulse dynamics. As we demonstrate explicitly in Section\,\ref{s:GISF}, 
for simple manifolds this rescaling can be uniform across the manifold, in  which case it amounts to a linear scaling of time.
\end{remark}
\begin{proof}
Since $\cG_1^{-1}\cT$ is an $n$ dimensional space, Corollary\,\ref{c:cG-coercivity} that $n(\cG_1\cL\cG_1-\mu_\cG)\leq n$.  The main step
to establish the hypotheses (\textbf{H0})-(\textbf{H4}) for the general gradient flow is to show that the operator $\cG_1\cL\cG_1$ retains
its spectral gap. To this end consider the eigenvalue problem
$$ \cG_1\cL\cG_1 \Psi = \lambda \Psi.$$
For $\lambda\in\sigma(\cG_1\cL\cG_1)\cap [-\infty,\mu_\cG)$ we decompose the eigenfunction as
\beq
\label{e:Psi-decomp}
 \Psi= \cG_1^{-1}\phi+ \Psi^\bot,
\eeq
where $\phi$ lies in $Y_\vp$ and $\Psi^\bot\bot \cG_1^{-1}Y_\vp.$ Projecting the eigenvalue problem onto $\cG_1^{-1}\phi$ we have
\beq
\label{e:PGIphi}
\langle \cL \phi,\phi\rangle + \langle \cG_1 \Psi^\bot, \cL\phi\rangle_X = \lambda \|\cG_1^{-1}\phi\|_X^2.
\eeq
Isolating $\lambda$ and bounding the first inner product with(\textbf{H1}), we use (\textbf{EH3}) and Raleigh-Ritz to obtain
\beq
\label{e:PGIphi2}
|\lambda| \leq \frac{c_0\delta\|\phi\|_X^2 + c_0 \delta \|\cG_1\phi\|_X\|\Psi^\bot\|_X}{\|\cG_1^{-1}\phi\|_X^2}\leq
c_0\delta\rho^2\left(1+ \frac{\|\Psi^\bot\|_X}{\|\cG_1^{-1}\phi\|_X}\right).
\eeq
Projecting the eigenvalue relation onto $\Psi^\bot$ yields
$$\langle \cL\phi, \cG_1\Psi^\bot\rangle_X + \langle\cG_1 \cL\cG_1 \Psi^\bot, \Psi^\bot\rangle_X = \lambda \|\Psi^\bot\|_X^2.$$
Using the coercivity result on the second term and  applying (\textbf{H1}) and (\textbf{EH3}) to the first term on the right-hand side
we find that
$$ (\mu-\lambda)\|\Psi^\bot\|_{H_{\cG_1}}\leq c_0\delta\rho^2 \|\cG_1^{-1}\phi\|_X.$$
In particular we bound
$$\frac{\|\Psi^\bot\|_X}{\|\cG_1^{-1}\phi\|_{X}} \leq \frac{\|\Psi^\bot\|_{H_{\cG_1}}}{\|\cG_1^{-1}\phi\|_{X}} \leq 
\frac {c_0 \delta\rho^2}{(\mu_\cG-\lambda)}.$$
With the normalization $1=\|\Psi\|_X^2= \|\cG_1^{-1}\phi\|_X^2+ \|\Psi^\bot\|_X^2,$ the estimate above and (\ref{e:PGIphi2}) imply that
\beq
\label{e:PGIphi3}
|\lambda|  +\frac{\|\Psi^\bot\|_{H_{\cG_1}}}{\|\cG_1^{-1}\phi\|_X}\leq  c\delta\rho^2\leq c \delta_\cG,
\eeq
This shows that $\lambda\in \sigma(\cG_1\cL\cG_1)$ and $\lambda <\mu_\cG$ implies
that $|\lambda|< c \delta_\cG \ll \mu_\cG$, which establishes the spectral gap. Moreover, to leading order in $\delta_\cG$, the operator 
$\cG_1^{-1}$ maps the slow  eigenfunctions of $\cL$ onto the slow eigenfunctions of $\cG_1\cL\cG_1$, even though this relation does 
not generically hold for the eigenfunctions of the stable spectrum. 

We assume that the hypotheses (\textbf{H0})-(\textbf{H4}) and (\ref{e:SRN-nonlin}) hold for the system with gradient $\Pi_0$ and
verify that they hold for the flow (\ref{e:G-flow2}), written in the form (\ref{e:cG-w}). This amounts to the replacement of the spaces $X=X$, $H=H_{\cG_1}$, the small parameter $\delta$ with $\delta_\cG$, the residual $\cR$ with $\cR_{\cG_1}:= \rho\cG_1 \cR$ and
the role of the tangent plane $\cT$ with $\cG_1^{-1}\cT.$  The equivalent of estimate (\ref{e:SRN-nonlin})  for the nonlinear term of 
(\ref{e:cG-w}) follows  immediately by assumption (\textbf{EH2}).
The hypotheses (\textbf{H0}) with bound $\delta_\cG$ holds for $\cR_{\cG_1}$ from assumption (\ref{e:EH3-1}) of (\textbf{EH3}). 
Since the eigenfunctions $\{\psi_i\}_{i=1}^n$ of $\cL$ are orthonormal in $X$, we deduce that the dim$(\cG_1^{-1}Y_\vp)= n$.
Motivated by (\ref{e:PGIphi3}) we may introduce the slow space $Y_{\vq,\cG_1}$ associated to $\cG_1\cL\cG_1$ with $k_s=\mu_\cG$. 
Since the bilinear form $b_{\cG_1}$ introduced in Corollary\,\ref{c:cG-coercivity} satisfies $b_{\cG_1}(u,v)\leq c_0\delta$ for all 
$u,v\in\cG_1^{-1}Y_\vp$ we deduce that dim$Y_{\vp,\cG_1}=n$ and that  (\textbf{H1}) holds.  
The operator $\cG_1\cL\cG_1$ constrained to act on $\cG_1^{-1}\cT_\vp\cap X_0$ is self-adjoint and has its spectrum contained in $(k_s,\infty)$.
It follows that the resolvent of $-\cG_1\cL\cG_1$ is uniformly bounded on the set $\{\Re \lambda < k_s\}$ and hence the semigroup 
$S_\vq$ associated to $-\cG_1\cL\cG_1$ is analytic and satisfies (\ref{e:H2}).
The slow eigenfunctions $\{\Psi_i\}_{i=1}^n$ of $\cG_1\cL\cG_1$ satisfy 
$$\Psi_i= \cG_1^{-1} \frac{\partial \Phi}{\partial p_i} + \Psi_i^\bot,$$ 
where $\Phi$ is smooth and the error term $\Psi_i^\bot$ satisfies the bound (\ref{e:PGIphi3}). Since 
$$\frac{\partial \cG_1^{-1} \Phi}{\partial p_i} = \cG_1^{-1} \frac{\partial \Phi}{\partial p_i},$$ 
and since (\textbf{H3}) holds with gradient $\Pi_0$, the bounds (\ref{e:PGIphi3}) establish (\textbf{H3}) for $\cG_1$, that is up to
a reparameterization of $\vp$, the bound (\ref{e:H3}) holds with $\partial_{p_i}\Phi$ replaced with $\partial_{\tp_i}\cG_1^{-1}\Phi$ and 
with $\delta$ replaced with small parameter $\delta_\cG$. 
Since the operator $\cG_1^{-1}$ is uniformly bounded on $H$,  the reparameterization  $\tvp$ of $\cM$ is uniformly smooth in $\vp$ the Hessian $\cG_1^{-1}\nabla^2_{\tvp} \Phi$ is bounded in the $H_{\cG_1}$ norm. The assumption (\textbf{H4}) follows.

The ODE  (\ref{PulseODE-cG1}) arises from the projection of the linear terms in(\ref{e:cG-w}) onto the small eigenspace, $Y_\vp$, of 
$\cG_1\cL\cG_1$.  The factors of $\rho$ cancel out, and the action of $\cG_1$ on $\cR$ is canceled by the $\cG_1^{-1}$ prefactor that 
maps $Y_\vp$ for $\Pi_0$ onto the leading order form of $Y_\vp$ for $\cG_1.$ The error terms arise from the bound on $\|w\|_{H_{\cG_1}}$
which follows from the estimates on the decomposition analogous to (\ref{e:w-eq}).
\end{proof}

\section{Pulse Dynamics and Gradient Invariance in FCH Gradient Flows}
\label{s:FCH}
We apply the results of Section 2 to gradient flows of FCH energy (\ref{e:FCH}) on the bounded domain $[0,\dell]\subset\R$.
For simplicity of presentation we set $\eta_1=\eta_2=0$, as these parameters have limited impact in the one-dimensional setting.

\subsection{Construction of the $n$-pulse Quasi-steady Manifold}
Introducing  the inner scaling $z=\frac{x}{\eps}$, we re-write the FCH as
\begin{equation}
\begin{aligned}
\mrI\left(u\right)=\displaystyle\int_{0}^{\frac{\dell}{\varepsilon}}\frac{1}{2}\left(\partial_z^2{u}-W'(u)\right)^2dz,  \label{sFCH0}
\end{aligned}
\end{equation}
and subject it to the mass constraint
\begin{equation}
\displaystyle\int_{0}^{\frac{{\dell}}{\eps}}(u-b_-)dz=M. 
\label{massconst}
\end{equation}
It is natural to consider $\mrI$ acting on admissible functions that satisfy the mass constraint and first-order Neumann boundary conditions 
\begin{equation}
\mathcal{A}=\bigg\{u\in H^2\left(\left[0,{\frac{\dell}{\eps}}\right]\right) \bigg\lvert \displaystyle\int_{0}^{{\frac{\dell}{\varepsilon}}}(u-b_-)dz=M, u_z(0)=u_z\left({\frac{\dell}{\eps}}\right)=0\bigg\}.
\end{equation}
The critical points of the inner scaling of FCH over the admissible space $\mathcal{A}\cap{H^4}\left(\left[0,{\frac{\dell}{\varepsilon}}\right]\right)$  are the solutions to the Euler-Lagrange equation 
\begin{equation}
\begin{cases}
&\qquad\nabla_X \mrI \coloneqq\left(\partial_z^2-W''(u)\right)\left(\partial_z^2{u}-W'(u)\right)=\lambda_\varepsilon, \\
&\partial_z^3{u}(0)=0,\partial_z^3{u\left({\frac{\dell}{\varepsilon}}\right)}=0, \partial_z{u}(0)=0, \partial_z{u}\left({\frac{\dell}{\varepsilon}}\right)=0,
\end{cases}\label{ELeq}
\end{equation}
where $\nabla_X$ is the first variational derivative of $\mrI$ with respect to $L^2$ inner product and $\lambda_\varepsilon$ is the $\eps$-dependent Lagrange multiplier. The no-flux boundary conditions arise naturally from the Euler-Lagrange formulation. To leading order the low-energy manifold is constructed from solutions
\begin{equation}
\label{e:freeway}
\partial_z^2{u}-W'(u)=0,
\end{equation}
that satisfy the no-flux boundary conditions. Classical phase-plane arguments show that \eqref{e:freeway} supports a homoclinic solution 
satisfying $\phi_h\rightarrow{b_-}$ as $z\rightarrow{\pm\infty}$. The n-pulse Ansatz, defined on all of $\R$, is given by
\begin{equation}
{u}_n\coloneqq b_-+ \sum_{j=1}^n \overline{\phi}_h\left(z-p_j\right), \label{78}
\end{equation}
where $\overline{\phi}_h\coloneqq\phi_h-b_-$ and ${\mathbf{p}}=(p_1,p_2,...,p_n)^{t}\in\R^n$ is the vector of pulse locations. The admissible set of pulse locations is given by
\begin{equation}
\cP:=\{{\mathbf{p}}\in\R^n:p_i<p_{i+1}\quad \textit{for} \quad i=0,...,n \quad \textit{and} \quad \Delta\mathbf{p}\geq \ell\},\label{pset}
\end{equation}
where $\Delta{\mathbf{p}}:=\min\limits_{i\neq{j}}|p_i-p_j|,$ and the boundary pulse locations $p_0$ and $p_{n+1}$ are introduced below. 
The pulse spacing parameter $\ell>0$ will be chosen sufficiently large that the exponential tail-tail interaction
terms $\delta:=e^{-\sqrt{\alpha_-}{\ell}}$ arising in the calculations are small compared to $\eps$. In particular this implies that $\ell\gg |\ln\eps|$. 

To complete the definition of the pulse manifold we introduce the operator
\begin{equation}
\mrL\coloneqq\partial_z^2-W''(\phi_h),
\end{equation}
corresponding to the linearization of \eqref{e:freeway} about $\phi_h$, as well as the operator
\begin{equation}
\mrL_n(\vp)\coloneqq\partial_z^2-W''\left({u}_n\right), \label{e:Ln-def}
\end{equation}
with both acting on the \emph{unbounded} domain $H^2(\R)$. To accommodate the mass constraint into the pulse ansatz we introduce 
$\mB_{j}\in{L^\infty(\R)}$ for $j=1,2$ as the solutions of
\begin{equation}
\mrL^j\mB_j=1,
\end{equation}
that are orthogonal to the kernel of $\cL$. These functions can be decomposed as
\begin{equation}
\mB_j=\omB_j+\mB_{j,\infty},
\end{equation}
where $\omB_j\in L^2(\R)$ decays exponentially to zero and the constant $\mB_{j,\infty} = (-\alpha_-)^{-j}$ where  $\alpha_-=W''(b_-)>0$. 
We introduce the background correction
\begin{equation}
\mB_{j,n}(z;\vp)\coloneqq \mB_{j,\infty}+ \sum\limits_{i=1}^n \omB_j(z-p_i),
\end{equation} 
and the boundary correction
\beq
\label{e:E-def}
\mrE(z;\vp):=  (1+e_0z)e^{-\sqrt{\alpha_-}(z-p_0)} + (1+e_{n+1}z)e^{\sqrt{\alpha_-}(z-p_{n+1})}.
\eeq
The full n-pulse ansatz takes the form
\begin{equation}
\label{Phi-def}
\Phi(z;\vp)\coloneqq u_n(z;\vp)+ \delta \lambda\mB_{2,n}(z;\vp)+\mrE(z,\vp,\lambda).
\end{equation}
The parameters in the boundary correction $\mrE$ are chosen dynamically to satisfy the four boundary conditions in  (\ref{ELeq}) while
the Euler-Lagrange parameter $\lambda$ is chosen dynamically to enforce the prescribed total mass constraint,
\beq
\label{Mass-eq}
\int_0^{\dell/\eps} \Phi(z;\vp)dz = M.
\eeq
Based upon Lemma\,\ref{l:App}, in the Appendix we can write this in the form
\begin{equation}
\Phi(z;\vp)= u_n(z;\vp)+ \delta P,
\label{Phi-def_s}
\end{equation}
where the perturbations $P$ are uniformly bounded in $H^4(0,\dell/\eps).$ Through these relations, the five internal parameters
$\tvp:= (p_0, p_{n+1}, e_0, e_{n+1}, \lambda),$ are prescribed as functions of $n$ pulse positions $\vp.$ To leading order, the boundary
pulse locations $p_0$ and $p_{n+1}$ are the reflection of $p_1$ and $p_n$ about the boundary points $0$ and $\dell/\eps$, respectively.  
The parameters $p_0$ and $e_0$ characterize the linearization of the two dimensional stable manifold of the fourth order system
$$ (\partial_z^2 - W''(u))(\partial_z^2 u - W'(u))=0,$$
at the equilibria $(b_-,0,0,0)$ while $p_{n+1}$ and $e_{n+1}$ characterize the linearization of the unstable manifold associated of this
system at $(b_-,0,0,0)$. 

The manifold of $n$-pulse solutions  with mass $M$ takes the form
\beq
\cM_{n,M}:=\{\Phi(\vp)\lvert \vp \in\mathcal{P} \,  \}.
\label{Mn-def}
\eeq 
The tangent plane to $\cM_{n,M}$ at $\Phi(\vp)$  takes the form 
\beq
\cT(\vp) =\mathrm{span}\left\{\frac{\partial \Phi(\vp)}{\partial{p_i}}\, \bigl|\, i=1,\ldots, n, \,\mathbf{p}\in\cP \right\}.
\eeq
 From Lemma\,\ref{l:App}  we have $\|E\|_{H^4}=O(\delta)$, and we calculate that
\beq
\cT(\vp) =\textrm{span}\Bigl( \left \{\phi_h'(z-p_i)+\delta \lambda \mB_2'(z-p_i)+\sqrt{\alpha_-}\delta_{1i} E_0-\sqrt{\alpha_-}\delta_{ni}E_{n+1}\right\}_{i=1}^n+O(\eps\delta,\delta^2)\Bigr)
\eeq
where $\delta_{ij}$ denotes the usual Kronecker delta function.

\subsection{Modulational Stability of $n$-pulses via SRN}
\label{chap:locmin}

We apply the SRN theorem  to the zero-mass gradient flow of FCH energy subject to
no-flux boundary conditions, obtaining the asymptotic attractivity and modulational stabilty of the n-pulse manifold.  
Specifically we set $X=L^2(0,\dell/\eps)$ and $H=H^4(0,\dell/\eps)$ subject to zero flux boundary conditions. 
We consider the $L^2$ mass-preserving gradient flow of the FCH,
\begin{equation}
\begin{aligned}
\label{e:FCH-flow}
&u_t=F(u):=-\Pi_0 \nabla_X \mrI(u),\\
&u(z,0)=u_0(z),
\end{aligned}
\end{equation}
where the zero-mass projection, $\Pi_0$, is defined as
$\Pi_0{f}\coloneqq{f}-\langle{f}\rangle_\dell$ with $\langle{f}\rangle_{\dell}$ denoting the average value of $f$ over $[0,\frac{\dell}{\eps}].$
This corresponds to the choice of gradient $\cG=\Pi_0$ and $X_\cG=\{1\}^\perp.$ 
The zero-mass projection gradient flow of the Cahn-Hilliard free energy modeling a phase separation process in a binary mixture was analyzed in \cite{RubStern}.

We consider solutions of (\ref{e:FCH-flow}) corresponding to initial data of the form 
\beq
\label{e:u0-init}
u_0= \Phi(z;\vp_0)+w_0(z).
\eeq
where $\vp_0\in\cP$ and $w_0\in H$ with $\|w_0\|_{H^4}$ sufficiently small, has zero mass, 
so that $u_0$ satisfies the boundary conditions and has mass $M$. 
We show that such initial data remain near $\mathcal{M}_M$ so long as they avoid its boundary, and during this time the solution satisfies a decomposition
\beq
u(t) = \Phi(\cdot ;\vp(t)) +w(t),
\eeq
and project the dynamics of \eqref{e:FCH-flow} onto the tangent plain of $\cM_{n,M}$ to derive an evolution for the pulse positions $\vp$ for which the remainder $w$, remains small. 
Moreover we identify small regions in the interior of $\cP$ associated to nearly equispaced pulse positions which the reduced flow (\ref{pulseODE}) leaves forward invariant. For initial data in these sets the exit time $\Texit=+\infty.$ 

We Taylor expand the the variational derivative of $\mrI$ about $\Phi(\vp)$  
\beq
\frac{\delta{\mrI}}{\delta{u}}(u)= \nabla_X \mrI(\Phi(\vp))+ \nabla_X^2 \mrI (\Phi) w+\cN_S(w).
\eeq
Using the expansion (\ref{Phi-def_s}) we identify leading order terms in the residual,
\beq
\begin{aligned}
\cR:= -\Pi_0 \frac{\delta{\mrI}}{\delta{u}}(\Phi(\vp)) &= -\Pi_0(\partial_z^2 -W''(u_n+\delta\, P))\left(\partial_z^2 u_n -W'(u_n) +\delta \mrL_n  P + O(\delta^2)\right),\\
 &= -\Pi_0\left(\mrL_n\cR_n + \delta \lambda \right)+ O(\delta^2),
 \label{e:resid}
 \end{aligned}
\eeq 
where we have introduced the $n$-pulse residual
\beq
\label{e:resid_n}
\cR_n(\vp):= \partial_z^2 u_n -W'(u_n).
\eeq
We denote the second variation of $\mrI$ as
\begin{equation}
\cL_\vp\coloneqq\nabla_X^2 \mrI=\left(\partial_z^2-W''(\Phi)\right)^2-\left(\partial_z^2 \Phi-W'(\Phi)\right)W'''(\Phi).\label{secvarI}
\end{equation}
We drop the $\vp$ subscript were doing so causes no ambiguity. Using the form of (\ref{Phi-def_s}) we expand \eqref{secvarI} about $u_n$ up to $\cO(\delta^2)$ terms
\begin{equation}
\begin{aligned} 
\cL&= \Bigl(\mrL_n- \delta W'''(u_n)P+\cO(\delta^2)\Bigr)^2- \Bigl(\cR_n+\delta\mrL_n P + \cO(\delta^2)\Bigr)\times\\
          &\quad \Bigl(W'''(u_n)+\delta W^{(4)}(u_n)P+\cO(\delta^2)\Bigr).
\end{aligned}
\end{equation}
From the Appendix we see that  $\mrL_n P= \lambda \mB_{n,1} +O(\delta)$, and expanding out the operators we find that
\beq
\cL=\mrL_n^2  -\delta\left( \mrL_n (W'''(u_n)P\cdot) -W'''(u_n)P \mrL_n\right) -W'''(u_n)(\cR_n +\delta \lambda \mB_1) + \cO(\delta^2).
\label{secvarI2}
\end{equation}
In particular the dominant term in $\cL$ is the positive semi-definite operator $\mrL_n^2$ with the lower order terms relatively compact with
respect to $\mrL^2_n.$  The bilinear form
\beq
b(u,v):= \langle \cL u, v\rangle_{L^2},
\eeq 
with $u,v\in H$, generated by the constrained operator $\Pi_0\cL\Pi_0$ which is self-adjoint. 
Indeed, the linearization $\mbL$ of the vector field $F=-\Pi_0 \nabla_X  \mrI$ at $\Phi$ takes the form 
\beq
\label{e:mbL-def}
\mbL=- \Pi_0 \cL.
\eeq 
Since the first projection in $\Pi_0\cL\Pi_0$ is superfluous when acting on $H$, $\mbL$ can be viewed as the negative of the 
generator of the bilinear form $b$ over $H$.  Consequently the spectrum of both $\mbL$ and $\cL$ are real and the adjoint eigenfunctions agree with 
the eigenfunctions, with the exception of the kernel of $\mbL$ given at leading order by $\mB_2$ while the kernel of $\mbL^\dag$ is spanned by $1$. 
We scale the eigenfunctions of $\cL$ to have $X$ norm one.  

\subsubsection{Verification of SRN Hypothesis - the $\Pi_0$ gradient flow}
We establish that the manifold $\cM_{n,M}$ and the family of associated linearized operators $\{\mbL_\vp\}_{\vp\in\cP}$ satisfy 
the hypotheses (\textbf{H0})-(\textbf{H4}). To establish (\ref{e:H0}) of (\textbf{H0}), we recall the form of the residual, (\ref{e:resid}). Since $\Pi_0$ 
annihilates constants, it follows that $\Pi_0 \lambda=0$ and
\beq
\|\cR\|_{H} =  \|\cL \cR_n \|_{H} + O(\delta^2).
\eeq
The residual term is dominated by tail-tail interactions of the adjacent pulses. For $j=1, \ldots, n-1$ we introduce the midpoints $m_j:=(p_{j}+p_{j+1})/2$ and set $m_0=0$ and $m_{n}=\dell/\eps.$ 
We partition 
$$\left[0,\dell/\eps\right]=\cup_{j=0}^{n} [m_j,m_{j+1}],$$ 
and on the interval $\cI_j:=[m_{j-1},m_{j}]$ we write 
\beq
\label{e:T_j-def}
 u_n= \phi_{h,j} + T_j,
 \eeq
where $\phi_{h,j}:=\phi_h(z-p_j)$ and the tail term $T_j:=\sum_{k\neq j} \ophi_h(z-p_k).$
Expanding the n-pulse residual on $\cI_j$ we obtain
\begin{equation}
\label{e:cR-n}
\cR_n = (\partial_z^2 -W''(\phi_{h,j})) T_j - \frac12 W'''(\phi_{h,j}) T_j^2 + \cO\left(\delta^{\frac32}\right), \hspace{0.5in} {\rm for}\, z\in\cI_j.
\end{equation}
We introduce the far-field operator $\mrL_\infty:= \partial_z^2 -\alpha_-$ and write
\begin{equation}
\label{e:cR_n}
\cR_n = \mrL_\infty T_j - (\alpha_--W''(\phi_{h,j})) T_j - \frac12 W'''(\phi_{h,j}) T_j^2 + \cO\left(\delta^{\frac32}\right), \hspace{0.5in} {\rm for}\, z\in\cI_j.
\end{equation}
Using the facts that $\mrL_\infty e^{\pm\sqrt{\alpha_-} z}=0$,  that the function $\alpha_--W''(\phi_{h,j})$ decays exponentially away from $z=p_j$, and that the
functions in $\cR_n$ are smooth with $L^2$ norms of all derivatives of the same order, it is straightforward to estimate that
\beq
 \|\cR_n\|_{H^4(\cI_j)} = O(\delta).
 \label{e:H0_verified}
 \eeq
 Summing over the intervals we obtain (\ref{e:H0}).

To establish (\textbf{H1}) we observe from (\ref{secvarI2}) and (\ref{e:H0_verified}) that we have the decomposition
\beq
\label{e: mbL-pert}
 -\Pi_0\cL\Pi_0 = -\Pi_0 \mrL^2 \Pi_0 +\cO(\delta),
 \eeq
 where the error terms are small and relatively compact as operators on $H.$ We first examine the operator $\mrL$ acting on $H^2(\R)$, 
 where it is a self-adjoint Sturm Liouville operator arising as the linearization of the pulse equation (\ref{e:freeway}) about the homoclinic pulse $
 \phi_h$. The spectrum of $\mrL$ is real and takes the form 
 $\sigma(\mrL) = [-\infty, -\alpha_-] \cup \{\lambda_r< \ldots < \lambda_2<\lambda_1=0< \lambda_0\} ,$ where the number of point spectrum, 
 $r\geq1$ is finite and depends  upon the choice of well $W$. Since $u_n$ is an $n$-pulse constructed from $n$ well-separated copies of $\phi_h$, the results of \cite{Sands} imply that the point spectrum of $\mrL_n$, the linearization of (\ref{e:freeway}) about $u_n$, is composed of 
 $n$ copies of $\sigma_p(\mrL)$, up to $O(\delta).$ That is,  to each $\lambda_k\in\sigma_p(\mrL)$, 
 there are $n$ eigenvalues 
 $\{\lambda_{k,j}\}_{j=1}^n\in \sigma_p(\mrL_n(\vp))$,  such that 
 $\max\limits_{j=1,...,n}|\lambda_{k}-\lambda_{k,j}|=\cO(\delta)$.  By standard perturbation theory,
 restricting the operator $\mrL_n$ to act on the bounded domain $H$ perturbs the point spectrum by at most $\cO(\delta)$, 
 see  \cite[Section 9.6]{KapProbook}, for a detailed discussion.  By the spectral mapping theorem, since $\mrL_n$ is self-adjoint on $H$, 
 $\sigma(\mrL_n^2)=\left\{\lambda^2 \bigl |\, \lambda\in\sigma(\mrL_n)\right\}.$ In particular we have 
 \beq
   \sigma(\mrL_n^2) \subset \left\{ \lambda_{1,1}^2\leq  \ldots \leq \lambda_{1,n}^2\right\} \cup [k_s,\infty),
   \eeq
  where $k_s:=\min\{\lambda_2^2, \alpha_-^2\}>0$ independent of $\eps$ and $\delta.$ 
  

 To localize the spectrum of $\Pi_0\mrL^2_n\Pi_0$  we introduce the bilinear form 
  \beq
  \label{def-bn}
  b_n(u,v):=((\mrL_n^2-\mu)u,v)_{L^2},
 \eeq
 constrained to act on  $u,v\in H\cap X_\cG=\{1\}^\perp.$  The constrained operator  $\Pi_0\mrL_n^2\Pi_0$ is induced by bilinear form acting
 on $H\cap X_\cG$, while $\mrL_n^2$ is induced by the form acting on all of $H$. The Rayleigh-Ritz formulation of eigenvalues  implies that the spectrum of $\Pi_0\cL_n^2\Pi_0$ is generically more positive than 
 the spectrum of $\mrL_n^2$ since the minimization in the Raleigh-Ritz formulation is taken over smaller spaces.  
 More specifically, recalling the notation $\mn(L)$ that denotes the number of negative eigenvalues of a 
  self-adjoint operator $L$, we deduce that $\mn(\Pi_0 (\mrL_n^2-\mu)\Pi_0) \leq \mn(\mrL_n^2-\mu) $ for all values of $\mu.$  
  In particular for $\mu\in (c_0\delta, k_s)$ we have
  \beq
  \label{e:n-index1}
   \mn(\Pi_0(\mrL_n^2-\mu)\Pi_0) \leq \mn(\mrL_n^2-\mu)= n.
  \eeq  
 However the projection off of the constant vector $1$, is not perturbative, our analysis requires an exact measure of the dimension of the slow space.   
 To establish that $ \mn(\Pi_0 (\mrL_n^2-\mu)\Pi_0)=n$, we show that $\Pi_0 (\mrL_n^2-\mu)\Pi_0$ is negative on the $n$-dimensional tangent space $\cT(\vp)\subset H\cap X_\cG.$
The estimates employed to establish (\textbf{H0})  verify that $\|\mrL_n^2 \frac{\partial\Phi}{\partial p_j}\|_{L^2} = O(\delta)$ for $j=1, \ldots, n$ and 
  $$\left\langle \frac{\partial\Phi}{\partial p_i},\frac{\partial\Phi}{\partial p_j}\right\rangle_{L^2} = \|\phi_h'\|_{L^2}^2 \delta_{ij} + O(\delta).$$
In particular we deduce that
$$ M_{ij}:=\left\langle  (\Pi_0 \mrL_n^2 \Pi_0 -\mu)\frac{\partial\Phi}{\partial p_i}, \frac{\partial\Phi}{\partial p_j}\right\rangle
=\left\langle  (\mrL_n^2  -\mu)\frac{\partial\Phi}{\partial p_i}, \frac{\partial\Phi}{\partial p_j}\right\rangle = \mu \delta_{ij}  \|\phi_h'\|_{L^2}^2 + \cO(\delta).$$
 For $\delta$ sufficiently small the matrix $M$ is diagonally dominant and is indeed a perturbation of the matrix $-\mu I_{n\times n}$ with $n$ 
 negative eigenvalues. We deduce that $\mn(\Pi_0\mrL_n^2\Pi_0-\mu)=n$ for  $\mu\in (c_0\delta, k_s)$, and hence $-\Pi_0\mrL_n^2\Pi_0$ 
 enjoys the slow-stable decomposition of (\textbf{H1}). This decomposition extends to $\mbL=-\Pi_0\cL\Pi_0$, modulo an $\cO(\delta)$ perturbation to $k_s$, since this operator is a self-adjoint $O(\delta)$-perturbation of $-\Pi_0\mrL^2\Pi_0$.

To establish (\textbf{H2}) we observe that for each $\vq\in\cP$ the space $Y_\vq^\perp$ is the range of the spectral 
projection associated to the stable spectrum, which in turn is contained in the the set $\{ \lambda \,\bigl |\, \Re\lambda \leq k_s\}.$ 
It follows that the resolvent $(\mbL-\lambda)^{-1}$ is uniformly bounded for these $\lambda$ as an operator on $Y_\vq'$.
The semigroup estimate (\ref{e:H2}) follows directly from application of the Gearhardt-Pr\"uss Theorem, see \cite{Gearhart} and \cite{Pruss}.    

 The verification of hypotheses (\textbf{H3}) follows from the spectral decomposition (\textbf{H1}). Indeed the spectral decomposition and the Raleygh-Ritz variational eigenvalue formulation
 implies that 
 \beq
 \|\mbL v \|_{X} \geq k_s \|v\|_{X},
 \eeq
 for all $v\in Y_\vp'. $ From a standard interpolation argument, the linear nature of the leading order fourth-derivative term in $\mbL$ 
 affords the existence of $\mu>0$, independent of $\eps$, for which
  \beq
  \label{e:X-H-coercivity}
 \|\mbL v \|_{X} \geq \mu \|v\|_{H}.
 \eeq
 We decompose the tangent-plane basis elements as
 \beq
 \frac{\partial \Phi}{\partial p_i}  = \sum\limits_{j=1}^n \beta_{ij} \psi_j  + \psi_i^\perp,
 \eeq
 where $\psi_i^\perp \in Y_\vp',$  and apply $\mbL$.   Taking the $L^2$ norm and using the triangle inequality we obtain the upper bound
 \beq \| \mbL \psi_i^\perp \|_{L^2} \leq  \left\| \sum\limits_{j=1}^n \beta_{ij} \lambda_j\psi_j  \right\|_{L^2} + \left\|\mbL \frac{\partial \Phi}{\partial p_i}\right\|_{L^2}.
 \eeq
 For each $i=1, \ldots, n$, we have $|\lambda_i|\leq c_0\delta$ while $\|\mbL \frac{\partial \Phi}{\partial p_i}\|_{L^2} =O(\delta)$; 
 we infer from the $H$-coercivity estimate that $\|\psi_j^\perp\|_{H}=O(\delta).$
 Since the matrix $\beta$ maps $\R^n$ to $\R^n$ is symmetric and maps an orthonormal basis of $Y_\vp$ asymptotically close to the asymptotically orthonormal basis of $\cT$, it is close to an orthogonal matrix. Using $\beta$ to reparameterize the pulse coordinates yields (\ref{e:H3}).
 
The hypothesis (\textbf{H4}) follows from the well-known analytic parametric dependence of the eigenvectors of an unbounded, self-adjoint operator 
with compact resolvent,  see for example \cite{KriMicRai}.

This verifies the hypotheses of Theorem\,\ref{t:KP}, in particular we deduce the reduced flow (\ref{pulseODE}) for the pulse dynamics
in the zero-mass gradient flow of the FCH energy.

\subsection{$\Pi_0$-Gradient Pulse Dynamics}

The application of Theorem\,\ref{t:KP} gives the ODE system \eqref{pulseODE} for the pulse positions. 
To simplify this flow and obtain the stability of the equispaced pulse, we first write the system mass to be in the form 
$M= n M_h+ M_1$, where $M_1\in(0,M_h)$ is $\cO(1)$, and $M_h=\int_\R (\phi_h-b_-)\,dz$ is the mass the homoclinic pulse in the
scaled variables. From Lemma\,\ref{l:App} of  the Appendix, the mass parameter $\lambda$ satisfies (\ref{e:lambda}). We recall the 
decomposition of the domain $[0,\dell/\eps]$ into the union of $\cI_j$, $j=1, \ldots, n$, and the form (\ref{e:cR-n}) of the n-pulse residual. 
For the pulses away from the boundary, that is for $i=2, \ldots, n-1,$ we have $\Pi_0 \phi_{h,i}=\cO(\delta^{\frac32})$ and we reduce
the the inner product in  \eqref{pulseODE} to the sum  
\begin{equation}
 \label{pulseODE2}
\dot{p}_i= -\frac{1}{\norm{\phi_h'}_{L^2(\R)}^2} \sum\limits_{j=1}^n \big\langle \mrL_j T_j + \frac12 W'''(\phi_{h,j})T_j^2,\partial_z\phi_{h,i}\big\rangle_{\cI_j}+\cO(\delta^{\frac32}).
\end{equation}
where we have introduced the local operator $\mrL_j := \partial_z^2 -W''(\phi_{h,j})$ considered to act on the unbounded domain. 
The function $\partial_z\phi_{h,i}$ lies in the kernel of $\mrL_i$, and for $j=i$ we determine that
\beq
\big\langle \mrL_i T_i,\partial_z\phi_{h,i}\big\rangle_{\cI_i} = - (\partial_z\phi_{h,i})(\partial_z T_i)\bigl|_{m_{i-1}}^{m_i}.
\eeq
Similarly, for the second term on the right-hand side of (\ref{pulseODE2}) we write 
$W'''(\phi_{h,i})\partial_z \phi_{h,i} = \partial_z\left( W''(\phi_{h,i})\right),$ and integrate by parts to obtain
\beq
\big\langle  \frac12 W'''(\phi_{h,i})T_i^2,\partial_z\phi_{h,i}\big\rangle_{\cI_i} = - \big\langle  T_i\partial_z T_i, W''(\phi_{h,i})\big\rangle_{\cI_i} + \frac12 W''(\phi_{h,i}) T_j^2\bigl|_{m_{i-1}}^{m_i}.
\eeq
Since $\phi_h$ tends to $b_-$ at an exponential rate, replacing $W''(\phi_{h,i})$ with is constant asymptotic value $\alpha_-$ incurs an 
$\cO(\delta^{\frac32})$ error in the integral and the boundary term, while integrating by parts on $\langle  T_i\partial_z T_i, \alpha_- \big\rangle_{\cI_i}$ cancels out the leading order boundary terms. We deduce that
\beq
\big\langle  \frac12 W'''(\phi_{h,i})T_i^2,\partial_z\phi_{h,i}\big\rangle_{\cI_i} = \cO\left(\delta^{\frac32}\right).
\eeq
For $j=i\pm 1$ the quadratic term $W'''(\phi_{h,j})T_j^2$ is uniformly $\cO(\delta^{\frac32})$ and hence negligible. 
The linear term, $\mrL_j T_j$, takes the form,
\beq
\big\langle \mrL_j T_j,\partial_z\phi_{h,i}\big\rangle_{\cI_j} = - (\partial_z\phi_{h,i})(\partial_z T_j)\bigl|_{m_{j-1}}^{m_j} + \bigl\langle T_j, (\alpha_--W''(\phi_{h,j})\partial_z \phi_{h,i}\bigl\rangle_{\cI_j}.
\eeq
The integrand in the inner product term on the right-hand side has $L^\infty$ norm $\cO(\delta^{\frac32})$ and is negligible. 
The inner product on the left-hand side is dominated by the boundary terms; recalling  the definition of $T_j$ and keeping only leading 
order terms we find
\beq
\dot{p}_i =  -\frac{ -\partial_z \phi_{h,i}\partial_z \phi_{h,i+1}\bigl|_{m_i} +\partial_z \phi_{h,i}\partial_z \phi_{h,i-1}\bigl|_{m_{i-1}}\!\!\!\!
                                                                                 -\left( \partial_z \phi_{h,i}\right)^2\bigl|_{m_{i-1}} +  \left( \partial_z \phi_{h,i}\right)^2\bigl|_{m_{i}} }
                                                                                        {\|\partial_z\phi_h\|_{L^2}} +\cO(\delta^{\frac32}).
\eeq
The pulse profiles have the far-field asymptotic form
\beq
\phi_h(z) = \phi_{\textrm{max}} e^{-\sqrt{\alpha_-} |z|},
\eeq
where the constant $\phi_{\textrm{max}}$ is determined by matching to the exact pulse shape $\phi_h$.  Since  $p_{i-1}<m_{i-1}<p_i<m_i<p_{i+1}$ it follows that
$\partial_z \phi_{h,i}(m_i) \partial_z \phi_{h,i+1}(m_i)<0$ and $\partial_z \phi_{h,i}(m_{i-1}) \partial_z \phi_{h,i-1}(m_{i-1})<0$. We conclude that
\beq
\label{e:n-pulse_dynamics}
\dot{p}_i =  -\frac{2\alpha_-\phi_{\textrm{max}}^2}{\|\partial_z\phi_h\|_{L^2}}
                   \left( e^{-\sqrt{\alpha_-}(p_{i+1}-p_i)}  -e^{-\sqrt{\alpha_-}(p_i-p_{i-1})}\right)  +\cO(\delta^{\frac32}),
\eeq
for $i=2, \ldots, n-1$. The same result for $i=1,n$ follows by replacing the boundary correction terms $E$ in (\ref{e:E-def})
with a pulse located at $p_0$ and $p_{n+1}$ given by Lemma\,\ref{l:App}. This replacement incurs a higher order error, 
and the analysis above extends to the cases $i=1, n.$

For a given $\dell$ and $n$ there is a unique equally spaced pulse configuration with $p_{i+1}-p_i= \frac{\dell}{n\eps}$ for $i=0, \ldots, n+1$.
Here we recall that the $p_0$ and $p_{n+1}$ denote the placements of shadow pulses outside the domain $[0,\dell/\eps].$ 
We conclude from (\ref{e:n-pulse_dynamics}) that if the pulses are equally separated then the pulse locations are stationary to leading order. Furthermore, the Jacobian matrix of the 
ODE system taken at the equispaced pulse locations takes the form
\begin{equation}
J=-\begin{pmatrix}
    \gamma & -\frac{\gamma}{2} & 0      &0   & \dots      & 0   \\
     -\frac{\gamma}{2} & \gamma       & -\frac{\gamma}{2}    & 0    &\dots     & 0   \\
     0 & -\frac{\gamma}{2}       & \gamma         & -\frac{\gamma}{2} &0   &\vdots                          \\
   \vdots &   0    &  \ddots        &  \ddots       &   \ddots     &0           \\
   0 & \vdots&\ddots &\ddots &  \ddots&-\frac{\gamma}{2} \\
     0 &   0      &    \dots       &  0     &        -\frac{\gamma}{2}                     & \gamma
  \end{pmatrix}
\end{equation}
where $\gamma:=\frac{2 \alpha_-\phi_\textrm{max}^2}{\|\partial_z \phi_h\|_{L^2}} e^{-\sqrt{\alpha_-}{\ell}} = \frac{2 \alpha_-\phi_\textrm{max}^2}{\|\partial_z \phi_h\|_{L^2}}\delta. $
The standard result for spectrum of tri-diagonal matrices shows that $J$ has $n$ negative eigenvalues 
\begin{equation}
\lambda_k=-\gamma\left(1+\cos\left({\frac{k}{n+1}}\right)\right)<0, \hspace{0.5in} \textrm{for}\quad k=1,\dots, n.
\end{equation}
We conclude that the equispaced pulse solution is linearly stable under the leading-order flow. Since the flow for $\vp$ is smooth, 
there exists an $\cO(\delta^{\frac12})$ neighborhood of the equispaced pulse configuration that is forward invariant under the flow.
Initial data of the system (\ref{e:FCH-flow}) corresponding to initial data $u_0$ with a decomposition (\ref{e:u0-init}) with $\|w\|_H=\cO(\delta)$
and $\vp_0$ within $\cO(\sqrt{\delta})$ of the equispaced pulse configuration will remain within $\cO(\sqrt{\delta})$ of the equispaced
pulse configuration for all time.

\subsection{EL Assumption Verification - General Gradients}
To apply Theorem\,\ref{t:KP-BFK} for the flow (\ref{G-flow}) with a general gradient $\cG\in\cC_{X_0},$ we
must verify that (\textbf{A0}) and the assumptions (\textbf{EH1}) and (\textbf{EA}) hold, and impose conditions
for which $\eta_*<\eta^*$.  From the form of (\ref{Phi-def}), and more particularly
(\ref{Phi-def_s}) it is straight forward to see that
\beq
J(\Phi) = \left\|\sum_{j=1}^n L_j (T_j+\delta P)\right\|_X^2 \leq c_0 \delta,
\eeq
for some $c_0>0$ independent of $\vp\in\cP.$ This bound is sharp since from (\ref{pulseODE}) we have the
leading order result
\beq
\partial_{p_j} J(\Phi) = \left\langle \nabla_X J(\Phi),\frac{\partial \Phi}{\partial p_j} \right\rangle_X  = \dot{p}_j + O(\delta^2).
\eeq
Introducing the equispaced $n$-pulse $\vp_{\rm eq}$ then from (\ref{e:n-pulse_dynamics}) we see that
$$ 
|\dot{\vp}| \geq d_0 \delta |\vp-\vp_{eq}|,
$$
for some $d_0>0$ independent of $\vp\in\cP$ and $\delta.$ It is trivial to show that the set of $u_0\in X_0\cap H$ with 
$$J(u_0)<\sup\limits_{\vp\in\cP}J(\Phi(\vp)) + \delta,$$
is non-empty, since this set contains the manifold $\cM_{n,M}.$  Thus we may take $\delta_0=c_0\delta$ and $\delta_1=\delta,$
for which choice we have $\eta_*=O(\sqrt{\delta})$ and this upper bound is asymptotically sharp for a set $\cP$ that is at least
$O(1)$ wide.  The assumption (\textbf{EH1}) is satisfied by construction
of $\cM_{n,M}$, while the normal coercivity assumption (\textbf{EA}) is equivalent to the argument used to establish (\ref{e:X-H-coercivity}).
 Indeed we may write $\nabla_X^2\mrI(\Phi)=\cL$ in the form
$$ \cL = \partial_z^4 + q_2(z) \partial_z^2 +q_1(z) \partial_z +q_0(z) + \alpha_-^2,$$
where $q_2, q_0\in L^2(0,\dell/\eps).$ For $\gamma_e>0$ sufficiently large we may write
$$ \cL = (\partial_z^4+\alpha_-^2+\gamma_e)\left( I+ B\right),$$
where $B:= (\partial_z^4+\alpha_-^2+\gamma_e)^{-1}\left(q_2(z) \partial_z^2 +q_1(z) \partial_z +q_0(z)\right),$ is a bounded map from $H$ into $H$
whose norm decreases to zero with increasing values of $\gamma_e$. The assumption (\textbf{EA}) follows.

We deduce that for any gradient, in particular the $H^{-1}$ gradient $\cG=-\partial_z^2$, that the manifold $\cM_{n,M}$ is 
quasi-steady under the flow (\ref{G-flow}). In particular if $u_0$ is within a $\eps$-neighborhood of $\cM$ in the $H$ norm,
and satisfies (\textbf{A0}) with $\delta_1=\delta$, then it is within an $\eta_*=O(\sqrt{\delta})$ neighborhood and will remain
there until time $\Texit$, which can be bounded from below using \cite[Theorem 2.2]{BFK-18}.

\subsection{Pulse dynamics for the $H^{-s}$ gradient flow}
We apply Theorem\,\ref{t:dyn-equiv} to (\ref{G-flow}) for a family of gradients parameterized by $s\in[0,1]$. Defining the gradients by
their inverses, we introduce the space $L_0^2(0,\dell/\eps)$ comprised of zero-mass functions and consider the operator 
$D:L^2_0(0,\dell/\eps)\mapsto H^{2}_0$ that maps $f\in L^2_0$ onto the solution 
$u$ of
\beq
\begin{aligned}
 -u_{zz} &= f \hspace{0.5in} \textrm{in}\, (0,\dell/\eps), \\
 u_z(0) &=u_z(\dell/\eps)=0,
\end{aligned}
\eeq
subject to $\Pi_0 u =u.$  The space $L_2^0$ denotes $L^2$ functions with zero-mass, on this space the operator $D$ has eigenvalues 
$\{\lambda_n=\dell^2/(\eps^2\pi^2n^2)\}_{n=1}^\infty$, which tend to zero as $n\to\infty.$ Consequently its norm is given by $\lambda_1=\dell^2/(\pi^2\eps^2)$. The operator $D^s$ denotes the $s$'th root of $D$, with the same eigenfunctions but eigenvalues defined equal to $\{\lambda_n^s\}_{n=1}^\infty.$  Correspondingly, we establish a norm-1 inverse operator by setting
$\cG=\lambda_1^{s}D^{-s}$ so that 
\beq
\label{e:cG1-def}
\cG_1:= \lambda_1^{s/2}D^{-s/2}= \frac{\dell^s}{\eps^{s} \pi^{s}} D^{-s/2},
\eeq
has smallest non-zero eigenvalue equal to $1$.  In particular, for $s=0$ we have $\cG=\cG_1=\Pi_0$ while for $s=1$ we have 
$\cG=\frac{\dell^2}{\eps^2\pi^2}D^{-1}=\frac{\dell^2}{\eps^2\pi^2}\partial_z^2$ and $\cG_1= \frac{\dell}{\eps\pi}D^{-\frac12}$. For $s=1$,
the operator $\cG$ is proportional to $\partial_z^2$, however $\cG_1$ is a positive, self-adjoint operator and is not proportional to $\partial_z$.

Theorem\,\ref{t:KP} has been established for gradient $\Pi_0$, we extend it to recover the pulse dynamics for the $H^{-s}$ gradient
flow for $s\in[0,1].$  To address the nonlinear estimate (\textbf{EH2}) we remark that for $v\in H^4$, we have the expansion,
$$ \cN_S(v) = \cG\left( W'''(\Phi) v \mrL_n v-\frac12 \mrL_n(W'''(\Phi)v^2)\right) + O(\|v\|_{H^4}^3),$$
where the operator $\mrL_n$ is defined in (\ref{e:Ln-def}).
We must establish identify a large parameter $\rho=\rho(\eps)$ for which we have the bound
\beq
\label{e:FCH-EH2}
 \|\rho\cG_1 \cN_S(\rho^{-1} \cG_1 w) \|_{H_{\cG_1}}= \|\rho\cG_1^2 \cN_S(\rho^{-1} \cG_1 w) \|_{H^4} \leq c \|\cG_1 w\|^2_{H^4},
 \eeq
for some constant $c>0$, independent of $\eps$ and $\rho$. 
The argument of the norm on the left-hand side has leading order terms
$$ \rho \cG_1^2 \cN_S(\rho^{-1} \cG_1 w)\sim (\eps^{2s}\rho)^{-1} D^{-s} \left( W'''(\Phi) (\cG_1 w)\mrL_n (\cG_1 w) -\frac12 \mrL_n(W'''(\Phi)( \cG_1 w)^2)\right).$$
Since the potential $W$ and the profile $\Phi$ are smooth, $D^{-s}$ is bounded as a map from $H^{2s}$ to $L^2$, 
$\mrL_n$ is bounded as an operator from $H^2$ into $L^2$, and the $H^k$ norm is an algebra on $\R$ for $k>1/2$, we have the 
estimate
$$ \|\rho\cG_1^2 \cN_S(\rho^{-1} \cG_1 w) \|_{H^4} \leq c \| \cG_1 w\|_{H^{2+2s}}^2,$$
so long as $\rho\geq \eps^{-2s}.$ This establishes (\ref{e:FCH-EH2}) and hence (\textbf{EH2}) for $s\in[0,1].$

To establish the bounds in (\textbf{EH3}), we recall that $\nabla_x\cJ(\Phi(\vp))=\cR(\vp)$ and return to the identities (\ref{e:resid}) and (\ref{e:cR_n}).
Applying the $H_{\cG_1}$ norm to $\cG_1\cR$ and using the scaling (\ref{e:cG1-def}), we find that (\ref{e:EH3-1}) holds with $\rho=\eps^{-s}.$ 
If $u\in\cT_\vp$, then up to exponentially small terms, $u$ is a linear combination of translates of $\phi_h'$ and (\ref{e:EH3-2}) holds with 
$\rho=\eps^{-s}$. Since $\delta=e^{-\sqrt{\alpha_-}\ell}$ and $\ell \gg |\ln\eps|$ it follows that $\delta \ll\eps^p$ for any $p>0$ and in particular
$\rho^2\delta = \eps^{-2s}\delta\ll 1$ for any choice of $s\in [0,1].$ This establishes Theorem\,\ref{t:dyn-equiv} for this range of gradients.

To interpret the scale of the reduced flow (\ref{pulseODE}) we first must identify the proper reparameterization the the pulse locations 
$\tvp=\tvp(\vp)$ for which (\textbf{H3}) holds with eigenfunctions $\Psi_i$ given by (\ref{e:Psi-decomp}) and $\Phi$ replaced with $\cG_1^{-1}\Phi$. 
This requires the normalization  $\|\cG_1^{-1} \partial_{\tvp_i} \Phi\|_{L^2}=1$ for $i=1, \ldots, n$, and can be achieved via the 
linear transformation $\tvp=\alpha\vp+\vp_*$ where $\vp_*$ is a fixed vector in $\R^n$ and the scaling constant
$$\alpha(s):= \|\cG_1^{-1} \Pi_0 \phi_h^\prime\|_{L^2} = \lambda_1^{-s/2}\|D^{s/2}\Pi_0\phi_h'\|_{L^2}.$$
It is straightforward to calculate that, up to exponentially small terms,  $\alpha(0)=\|\phi_h'\|_{L^2(\R)}$ and 
$\alpha(1)= \left(\frac{\eps\pi}{d}\right)^s\|\Pi_0\phi_h\|_{L^2(\R)}.$ Moreover $\alpha$ is a strictly decreasing function of $s$ as
all the eigenvalues of $\cG_1^{-1}$ are less than or equal to one, hence its norm decreases with growing $s$.
Changing variables from $\tvp$ to $\vp$ in (\ref{PulseODE-cG1}) we find
\beq
\dot{p_i} = \frac{1}{\alpha^{2}(s)} \left\langle \cR(\vp), \frac{\partial \Phi}{\partial p_i} \right\rangle + O\left(\alpha^{-1}\delta_\cG^{1+r},\alpha^{-1}\delta_\cG^2\right), \hspace{0.5in} \textrm{for}\, i=1, \ldots, n.
\eeq
The inner-product on the right-hand side equals the leading order term on the right-hand side of (\ref{e:n-pulse_dynamics}). 
This demonstrates that the impact of the change of gradient on the leading-order pulse dynamics amounts to a rescaling of their velocity.
\section{Acknowledgments}
The second author acknowledges support from the National Science Foundation through grant DMS-1813203.
\section{Appendix}
\small
We consider a manifold of mass $M$ as given in Section\,\ref{s:FCH} and satisfying the zero first and third derivative boundary
conditions on $[0,\dell/\eps]$. Then the ansatz $\Phi$ defined in (\ref{Phi-def}) satisfies the following estimates.
\begin{lem}
\label{l:App}
The internal parameters are given by
\beq e_0 = \sqrt{\alpha_-} \frac{d_3 -\alpha_- d_1}{d_3-3\alpha_- d_1},\hspace{0.5in}
\textrm{and}\hspace{0.5in} p_0 = -p_1 + O(\delta),
\eeq
where we have introduced $d_1=u_n'(0)+\lambda \mB_{2,n}'(0)$ and $d_3=u_n'''(0)+\lambda \mB_{2,n}'''(0)$.
Similar relations hold for $e_{n+1}$ and $p_{n+1}$. 
If $n\eps \ll 1$ and $M_1\gg \delta$ then the Lagrange multiplier $\lambda$ satisfies
%
In particular 
\beq
\label{e:lambda}
\lambda =  \eps \frac{M_1}{\dell\mB_{2,\infty}+\eps nM_\omB} +O\left(\eps \delta\right),
\eeq
and in particular $\partial_{p_i} \lambda = O\left(\eps \delta\right).$
\end{lem}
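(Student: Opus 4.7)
The plan is to view the five internal parameters $(e_0, p_0, e_{n+1}, p_{n+1}, \lambda)$ as the solution of the nonlinear system obtained by imposing the four no-flux boundary conditions in (\ref{ELeq}) together with the mass constraint (\ref{Mass-eq}). The key simplification is that the system decouples at leading order: since $\ell \gg |\ln\eps|$ implies $\delta = e^{-\sqrt{\alpha_-}\ell}$ is smaller than any power of $\eps$, the boundary-correction piece anchored at $z = p_{n+1}$ contributes only quantities of order $e^{-\sqrt{\alpha_-}\dell/\eps}$ at $z=0$ (and vice versa), which are negligible. This reduces the problem to two independent $2\times 2$ systems for $(e_0,p_0)$ and $(e_{n+1},p_{n+1})$, together with a scalar equation for $\lambda$.

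For the boundary parameters, set $g_0(z):=(1+e_0z)e^{-\sqrt{\alpha_-}(z-p_0)}$ and compute directly
\[ g_0'(0)=(e_0-\sqrt{\alpha_-})e^{\sqrt{\alpha_-}p_0},\qquad g_0'''(0)=(3\alpha_- e_0-\alpha_-^{3/2})e^{\sqrt{\alpha_-}p_0}. \]
The boundary conditions $\partial_z\Phi(0)=\partial_z^3\Phi(0)=0$ then read
\[ d_1+(e_0-\sqrt{\alpha_-})e^{\sqrt{\alpha_-}p_0}=0,\qquad d_3+(3\alpha_- e_0-\alpha_-^{3/2})e^{\sqrt{\alpha_-}p_0}=0, \]
modulo corrections of size $e^{-\sqrt{\alpha_-}\dell/\eps}$ from the right endpoint. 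Eliminating the exponential $e^{\sqrt{\alpha_-}p_0}$ by taking ratios of the two equations and solving the resulting linear equation for $e_0$ yields the stated closed-form expression. Substituting back into the first equation determines $e^{\sqrt{\alpha_-}p_0}$ in terms of $d_1$; since the leading contribution to $d_1$ is $\phi_h'(-p_1)\sim \sqrt{\alpha_-}\phi_{\textrm{max}}e^{-\sqrt{\alpha_-}p_1}$ (from the exponential tail of the nearest pulse, the other contributions being of order $\delta$ smaller), matching the exponential rates of both sides gives $p_0=-p_1+O(\delta)$. The symmetric calculation at $z=\dell/\eps$ handles $(e_{n+1},p_{n+1})$.

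For the Lagrange multiplier, substitute the ansatz (\ref{Phi-def}) into (\ref{Mass-eq}). Using the exponential decay of $\ophi_h$ one obtains $\int_0^{\dell/\eps}(u_n-b_-)\,dz = nM_h+O(\delta)$, and the decomposition $\mB_{2,n}=\mB_{2,\infty}+\sum_i\omB_2(\cdot-p_i)$ gives $\int_0^{\dell/\eps}\mB_{2,n}\,dz=(\dell/\eps)\mB_{2,\infty}+nM_{\omB}+O(\delta)$, with $M_{\omB}:=\int_\R\omB_2\,dz$. The integral of $\mrE$ is itself of order $\delta$ by the decay of the boundary corrections. Writing $M=nM_h+M_1$ and isolating $\lambda$ produces
\[ \lambda\bigl[(\dell/\eps)\mB_{2,\infty}+nM_{\omB}\bigr]=M_1+O(\delta), \]
which rearranges to the stated asymptotic expansion. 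The derivative bound $\partial_{p_i}\lambda=O(\eps\delta)$ follows by implicit differentiation: the denominator is $\vp$-independent, while the only $\vp$-dependence in the numerator enters through truncation of pulse tails at $0$ and $\dell/\eps$ and through the boundary-correction integrals, both of which have $p_i$-derivatives of order $\delta$; dividing by the large denominator $\sim \dell/\eps$ produces the extra factor of $\eps$.

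The main technical obstacle is rigorously justifying the leading-order decoupling, because the system is mildly coupled: $\lambda$ appears in $d_1,d_3$ and the exponential dependence on $p_0, p_{n+1}$ is nonlinear. This is handled by an implicit-function/contraction argument seeded with the explicit leading-order solution sketched above, using the smallness hierarchy $\delta\ll \eps^k$ for every $k>0$ to absorb the cross-coupling terms into the $O(\delta)$ and $O(\eps\delta)$ residuals. Smooth dependence of $(e_0,p_0,e_{n+1},p_{n+1},\lambda)$ on $\vp$ is an automatic byproduct, which is what is needed in the pulse-dynamics calculation of Section 3.
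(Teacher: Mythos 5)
Your proposal is correct and follows essentially the same route as the paper's (very terse) proof: the boundary parameters $(e_0,p_0)$ come from imposing the two no-flux conditions at $z=0$ on the explicit form of $\mrE$ and eliminating the common exponential factor, and $\lambda$ comes from the leading-order asymptotic of the mass integral, which is exactly the identity the paper displays. Your version simply supplies the ``simple calculation'' the paper omits, and your remarks on the decoupling of the two endpoints and on $\partial_{p_i}\lambda$ are consistent with the paper's error accounting.
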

\begin{proof}
The results on the parameters $e_0$ and $p_0$ follow from a simple calculation from the form of $\Phi$ given in (\ref{Phi-def}). For the
mass we calculate the  leading order asymptotic
\beq 
\int_0^{\dell/\eps} (\Phi-b_-)\, dz = n M_h +\lambda \left( \frac{\dell}{\eps} \mB_{2,\infty} +n M_\omB\right) - \frac{e^{\sqrt{\alpha_-}p_0}+e^{-\sqrt{\alpha_-}(\dell/\eps-p_{n+1})}}{\sqrt{\alpha_-}}  +O(\eps\delta^2).
\eeq
The results follow from the assumption on the size of the mass $M$.
\end{proof}
\normalsize
\bibliographystyle{plain}

\begin{thebibliography}{10}

\bibitem{ABF-98}
N.~Alikakos, L.~Bronsard, and Fusco G.
\newblock Slow motion in the gradient theory of phase transitions via energy
  and spectrum.
\newblock {\em Calculus of Variations and Partial Differential Equations},
  6(1):39--66, 12 1997.

\bibitem{BFK-18}
P.~Bates, G.~Fusco, and G.~Karali.
\newblock Gradient dynamics: motion near a manifold of quasi-equilbria.
\newblock {\em SIAM Journal on Dynamical Systems}, 17:2106--2145, 2018.

\bibitem{BDKP-13}
T.~Bellsky, A.~Doelman, T.~Kaper, and K.~Promislow.
\newblock Adiabatic stability under semi-strong interactions: the weakly damped
  regime.
\newblock {\em Indiana University Mathematics Journal}, 62:1809--1859, 2013.

\bibitem{BB-72}
B.~Benjamin.
\newblock The stability of solitary waves.
\newblock {\em Proceedings of the Royal Society, A}, 328:153--183, 1972.

\bibitem{DKP-07}
A.~Doelman, T.~Kaper, and K.~Promislow.
\newblock Nonlinear asymptotic stability of the semistrong pulse dynamics in a
  regularized {G}eirer-{M}einhardt model.
\newblock {\em SIAM Journal on Mathematical Analysis}, 38:1760--1787, 2007.

\bibitem{Gavish2011CurvatureDF}
N.~Gavish, G.~Hayrapetyan, K.~Promislow, and H.~Y. Li.
\newblock Curvature driven flow of bilayer interfaces.
\newblock {\em Physica D}, 240:675--693, 2011.

\bibitem{Gearhart}
L.~Gearhart.
\newblock Spectral theory for contraction semigroups on {H}ilbert space.
\newblock {\em Transactions of the American Mathematical Society},
  236:385--394, 1978.

\bibitem{GS-90}
G.~Gompper and M.~Schick.
\newblock Correlation between structural and interfacial properties of
  amphiphilic systems.
\newblock {\em Phys. Rev. Lett.}, 65:1116--1119, Aug 1990.

\bibitem{GSS-I}
M~Grillakis, J.~Shatah, and W.~Strauss.
\newblock Stability theory of solitary waves in the presence of symmetry, {I}.
\newblock {\em Journal of Functional Analysis}, 74:160--197, 1987.

\bibitem{GSS-II}
M~Grillakis, J.~Shatah, and W.~Strauss.
\newblock Stability theory of solitary waves in the presence of symmetry, {II}.
\newblock {\em Journal of Functional Analysis}, 94:308--348, 1990.

\bibitem{KapProbook}
T.~Kapitula and K.~Promislow.
\newblock {\em Spectral and Dynamical Stability of Nonlinear Waves}.
\newblock Springer, 2013.

\bibitem{KriMicRai}
A.~Kriegl, W.~P. Michor, and A.~Rainer.
\newblock Denjoy-{C}arleman differentiable perturbation of polynomials and
  unbounded operators.
\newblock {\em Integral Equations and Operator Theory}, 71:407--416, 2011.

\bibitem{OR-07}
F.~Otto and M.~Reznikoff.
\newblock Slow motion of gradient flows.
\newblock {\em Journal of Differential Equations}, 237:372--420, 2007.

\bibitem{Prom-02}
K.~Promislow.
\newblock A renormalization method for modulational stability of quasi-steady
  patterns in dispersive systems.
\newblock {\em SIAM J. Math. Analysis}, 33(6):1455--1482, 2002.

\bibitem{ProWetton}
K.~Promislow and B.~Wetton.
\newblock {PEM} fuel cells: A mathematical overview.
\newblock {\em SIAM Journal on Applied Mathematics}, 70(2):369--409, 2009.

\bibitem{ProZhang}
K.~Promislow and H.~Zhang.
\newblock Critical points of functionalized lagrangians.
\newblock {\em Discrete and Continuous Dynamical Systems- Series A},
  33:1231--1246, 4 2013.

\bibitem{Pruss}
J.~Prüss.
\newblock On the spectrum of ${C}_0$-semigroups.
\newblock {\em Transactions of the American Mathematical Society},
  284(2):847--857, 1984.

\bibitem{RubStern}
Jacob Rubinstein and Peter Sternberg.
\newblock Nonlocal reaction diffusion equations and nucleation.
\newblock {\em IMA Journal of Applied Mathematics}, 48(3):249--264, 09 1992.

\bibitem{Sands}
B.~Sandstede.
\newblock Stability of multi-pulse solutions.
\newblock {\em Transactions of the American Mathematical Society},
  350:429--472, 1998.

\bibitem{TS-87}
M.~Teubner and R.~Strey.
\newblock Origin of the scattering peak in microemulsions.
\newblock {\em The Journal of Chemical Physics}, 87(5):3195--3200, 1987.

\end{thebibliography}

\end{document}